\def\E{\mathbb{E}}
\def\var{\mathbb{Var}}
\def\po{{\rm Po}}
\def\deg{{\rm deg}}
\def\E{\mathbb{E}}
\def\R{\mathbb{R}}
\def\Z{\mathbb{Z}}
\def\b{\beta}
\def\eps{\varepsilon}
\def\cC{\mathcal {C}}
\def\cP{\mathcal {P}}
\def\cT{\mathcal {T}}
\def\cO{\mathcal {O}}
\def\cE{\mathcal {E}}
\def\cI{\mathcal {I}}
\def\1{\mathbf{1}}
\def\lam {\lambda}
\def\tce{t_c + \eps}
\def\tce2{t_c + \frac{\eps}{2}}
\def\var{\text{var}}
\newtheorem*{theorem*}{Theorem}
\newtheorem{theorem}{Theorem}
\newtheorem{lemma}[theorem]{Lemma}
\newtheorem{cor}[theorem]{Corollary}
\newtheorem{defn}[theorem]{Definition}
\newtheorem*{defn*}{Definition}
\newtheorem*{prop*}{Proposition}
\newtheorem*{conj*}{Conjecture}
\newtheorem*{fact*}{Fact}
\begin{document}
\title{Independent sets of a given size and structure in the  hypercube}

\author{Matthew Jenssen}
\address{School of Mathematics\\ University of Birmingham}
\author{Will Perkins}
\address{Department of Mathematics, Statistics, and Computer Science\\ University of Illinois at Chicago}
\author{Aditya Potukuchi}
	\email{m.jenssen@bham.ac.uk \\ math@willperkins.org \\adityap@uic.edu }

\date{\today}

\begin{abstract}
We determine the asymptotics of the number of independent sets of size $\lfloor \beta 2^{d-1} \rfloor$ in the discrete hypercube $Q_d = \{0,1\}^d$ for any fixed $\beta \in (0,1)$ as $d \to \infty$, extending a result of Galvin for $\beta \in (1-1/\sqrt{2},1)$.  Moreover, we prove a multivariate local central limit theorem for structural features of independent sets in $Q_d$ drawn according to the hard core model at any fixed fugacity $\lam>0$.  In proving these results we develop several general tools for performing combinatorial enumeration using polymer models and  the cluster expansion from statistical physics along with local central limit theorems.
\end{abstract}

\maketitle

\section{Introduction}
\label{secIntro}

Let $Q_d$ be the discrete hypercube:\ the graph with vertex set $\{0,1\}^d$ in which two vectors are joined by an edge if they differ in exactly one coordinate.  An independent set is a set of vertices that contains no edge.  Let $\cI(Q_d)$ be the set of independent sets of $Q_d$ and let $i(Q_d)= | \cI(Q_d)|$ be the number of independent sets of the hypercube.   The vertices of $Q_d$ can be divided into two sets, those whose coordinates sum to an even number and those whose coordinates sum to an odd number.  This partition shows that $Q_d$ is a bipartite graph.  We let $N := 2^{d-1}$ be the number of even (or odd) vertices of $Q_d$.  A trivial lower bound on $i(Q_d)$ is $2 \cdot 2^{N} -1$ obtained by considering independent sets of only even or only odd vertices.   A better lower bound is obtained by considering independent sets with an arbitrary (but constant) number of `defect' vertices on one side of the bipartition.  This increases the lower bound by a factor $\sqrt{e}$.   Korshunov and Sapozhenko showed that this gives the  correct asymptotics for $i(Q_d)$ as $d \to \infty$~\cite{korshunov1983number}. 
\begin{theorem}[Korshunov and Sapozhenko]
\label{thmSapKors}
As $d \to \infty$,
\[ i(Q_d) = (2\sqrt{e} +o(1)) 2^{N} \,.\]
\end{theorem}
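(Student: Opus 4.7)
The plan is to encode even-dominated independent sets as a polymer model on $\cO$ and extract the asymptotics via the cluster expansion, combining this with the $\cE\leftrightarrow\cO$ symmetry of $Q_d$. I would first partition $\cI(Q_d)$ into three classes, controlled by a slowly growing cutoff $K = K(d) \to \infty$: even-dominated $\mathcal{A}_\cE := \{I : |I \cap \cO| \le K\}$, odd-dominated $\mathcal{A}_\cO := \{I : |I \cap \cE| \le K\}$, and the remainder $\mathcal{A}_{\mathrm{mid}}$. The classes $\mathcal{A}_\cE$ and $\mathcal{A}_\cO$ are in bijection via the automorphism of $Q_d$ flipping a single coordinate, and their overlap $\{|I| \le 2K\}$ has size at most $2^{2dK}$, negligible against $2^N$. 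I would bound $|\mathcal{A}_{\mathrm{mid}}| \le \sum_{D \subseteq \cO,\, |D|>K} 2^{N-|N(D)|}$ and show this is $o(2^N)$ using vertex isoperimetry on $Q_d$ together with Sapozhenko-type container estimates; the generic term $\binom{N}{k} 2^{-dk} \lesssim (k!\,2^k)^{-1}$ is summably tiny for $k > K$, while clustered and near-complete $D$ are controlled by containerising. Hence $i(Q_d) = 2|\mathcal{A}_\cE|(1+o(1))$.

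Next I would compute $|\mathcal{A}_\cE|$. For each $I \in \mathcal{A}_\cE$, setting $D := I \cap \cO$, the even part $I \cap \cE$ is an arbitrary subset of $\cE \setminus N(D)$, giving
$$|\mathcal{A}_\cE| = 2^N \!\!\sum_{D \subseteq \cO,\, |D| \le K}\!\! 2^{-|N(D)|}.$$
Following the approach of the paper, I would recast this sum as a polymer-model partition function: call $u,v \in \cO$ \emph{linked} if they share an $\cE$-neighbour (equivalently, their Hamming distance is $2$); a \emph{polymer} $\gamma$ is a nonempty linked-connected subset of $\cO$; two polymers are \emph{compatible} if their $\cE$-neighbourhoods are disjoint; and $w(\gamma) := 2^{-|N(\gamma)|}$. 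Each $D$ decomposes into its linked components, and when these components are pairwise compatible the weights multiply. Non-compatible configurations (components sharing an $\cE$-neighbour) contribute only $o(1)$ to the sum by isoperimetry, so
$$\sum_{D \subseteq \cO,\, |D| \le K} \!\! 2^{-|N(D)|} \;=\; (1+o(1))\,\Xi, \qquad \Xi := \sum_{\Gamma\text{ compatible}} \prod_{\gamma \in \Gamma} w(\gamma).$$

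Finally I would verify the Koteck\'y--Preiss condition for $\Xi$ and invoke convergence of the cluster expansion, $\log \Xi = \sum_{\mathcal{C}} \phi(\mathcal{C}) \prod_{\gamma \in \mathcal{C}} w(\gamma)$. The two inputs are (i) a polymer-level isoperimetric bound $|N(\gamma)| \ge (1-o(1))\,d\,|\gamma|$, giving $w(\gamma) \le 2^{-(1-o(1))d|\gamma|}$, and (ii) a Sapozhenko-style container bound on the number of linked-connected polymers of size $k$ through a fixed vertex, growing at most like $d^{O(k)}$. Together these make the Koteck\'y--Preiss sum converge with room to spare. The only clusters contributing at leading order consist of a single singleton polymer: summing over the $N$ singletons gives $N \cdot 2^{-d} = \tfrac12$, while all larger clusters contribute $o(1)$ in total. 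Therefore $\log \Xi = \tfrac12 + o(1)$, $\Xi = \sqrt e\,(1+o(1))$, $|\mathcal{A}_\cE| = (\sqrt e + o(1))\,2^N$, and doubling yields $i(Q_d) = (2\sqrt e + o(1))\,2^N$. The main technical obstacle is (i) and (ii) with the right sharpness to drive uniform convergence as $K \to \infty$; concretely, the hard part is the Sapozhenko-style container/isoperimetric estimate for medium-sized polymers, the technical core of the classical proof repackaged in polymer-expansion language.
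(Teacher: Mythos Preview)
Your overall strategy --- polymer model on the minority side, cluster expansion, and Sapozhenko-type containers for the hard isoperimetric input --- is exactly the framework the paper inherits from \cite{jenssen2020independent}; the paper itself does not reprove Theorem~\ref{thmSapKors} but cites it, and its Theorems~\ref{lemPolyApprox} and~\ref{thm:clusterexp} specialise at $\lambda=1$ to give precisely the computation $\log\Xi_\cO = N\cdot 2^{-d}+o(1)=\tfrac12+o(1)$ that you outline.

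There is, however, a genuine gap in your treatment of $\mathcal{A}_{\mathrm{mid}}$. The bound you propose,
\[
|\mathcal{A}_{\mathrm{mid}}|\ \le\ \sum_{\substack{D\subseteq\cO \\ |D|>K}} 2^{\,N-|N(D)|},
\]
is \emph{not} $o(2^N)$. Since every independent set $I$ corresponds to a unique pair $(D,E)=(I\cap\cO,\,I\cap\cE)$ with $E\subseteq\cE\setminus N(D)$, one has the exact identity $\sum_{D\subseteq\cO}2^{N-|N(D)|}=i(Q_d)$, and therefore your sum equals $i(Q_d)-|\mathcal{A}_\cE|$. If the theorem holds this is $(\sqrt{e}+o(1))\,2^N$, not $o(2^N)$: your upper bound sweeps up all of $\mathcal{A}_\cO$ as well, because ``near-complete'' $D$ encode odd-dominated independent sets rather than balanced ones. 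No amount of containerising will make that sum small. The fix is to actually use the second constraint $|I\cap\cE|>K$ (equivalently, restrict to $D$ that are genuinely intermediate); this is precisely why the paper's polymers carry the side condition $|[S]|\le N/2$ on the bipartite closure rather than a naive size cutoff. With a restriction of that kind in place, the container/isoperimetry argument you sketch does go through.

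A minor point: once you decompose $D$ into its linked components, those components are \emph{automatically} pairwise compatible --- two linked components sharing an $\cE$-neighbour would be linked to each other and hence merge --- so there are no ``non-compatible configurations'' to discard, and the factorisation into $\Xi$ is exact rather than up to an $o(1)$ error.
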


Galvin later studied weighted independent sets in the hypercube.  For $\lam \ge 0$, define the independence polynomial of $Q_d$, 
\begin{align*}
Z_{Q_d} (\lam) = \sum_{I \in \cI(Q_d)} \lam ^{|I| } \,.
\end{align*}
Taking $\lam=1$ recovers $i(Q_d)$.
In what follows we will drop $Q_d$ from the notation, writing $Z(\lam)$ and $\cI$ for $Z_{Q_d}(\lam)$ and $\cI(Q_d)$.    

The independence polynomial $Z(\lam)$  is also the partition function of the hard-core model on $Q_d$: the probability distribution $\mu_\lam$ on $\cI$ defined by $\mu_\lam(I) = \lam^{|I|}/Z(\lam)$.     By generalizing Sapozhenko's alternative proof of Theorem~\ref{thmSapKors} in~\cite{sapozhenko1987number}, Galvin found the asymptotics for $Z(\lam)$ for $\lam > \sqrt{2}-1$~\cite{galvin2011threshold} (as well as the asymptotics of $\log Z(\lam)$ for $\lam = \Omega(\log d/d^{1/3})$).
\begin{theorem}[Galvin]
\label{thmGalvinZ}
For $\lam > \sqrt{2}- 1$, 
\[ Z(\lam) = (2+o(1)) (1+\lam)^{N} \exp \left[ \lam N  \left( \frac{1}{1+\lam}  \right)^d  \right ]    \]
as $d \to \infty$. 
\end{theorem}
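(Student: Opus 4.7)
The plan is to prove Theorem~\ref{thmGalvinZ} via the polymer-model and cluster-expansion strategy, which reinterprets the Sapozhenko--Galvin argument in statistical-physics language. Let $\mathcal{E}, \mathcal{O}$ denote the two sides of the bipartition of $Q_d$, both of size $N=2^{d-1}$, and for $D \subseteq \mathcal{O}$ write $N(D) \subseteq \mathcal{E}$ for its neighborhood in $Q_d$. Enumerating independent sets $I$ by their odd part $D:= I \cap \mathcal{O}$ (so that $I \cap \mathcal{E}$ is then any subset of $\mathcal{E}\setminus N(D)$) gives the exact identity
\[
Z(\lam) \;=\; (1+\lam)^N \sum_{D \subseteq \mathcal{O}} \lam^{|D|}(1+\lam)^{-|N(D)|}.
\]
The typical $D$ in the hard-core measure has $|D| \approx \lam N/(1+\lam)^d \to 0$, so the sum should be dominated by $D$'s whose components are small. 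I will make this rigorous by defining a \emph{polymer} to be a nonempty $\gamma\subseteq \mathcal{O}$ that is connected in the graph on $\mathcal{O}$ in which $u \sim v$ iff $d_{Q_d}(u,v)=2$ (equivalently, iff $N(u)\cap N(v)\ne\emptyset$), with weight $w(\gamma) = \lam^{|\gamma|}(1+\lam)^{-|N(\gamma)|}$, declaring two polymers compatible iff $N(\gamma)\cap N(\gamma')=\emptyset$; analogously define a polymer model on $\mathcal{E}$.

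The first step is a Peierls-type reduction: letting $\Xi^{\mathcal{O}}$ and $\Xi^{\mathcal{E}}$ denote the partition functions for configurations of pairwise compatible polymers of size at most some threshold $k_0=k_0(d)$, I would show
\[
Z(\lam) \;=\; (1+o(1))\, (1+\lam)^N \bigl(\Xi^{\mathcal{O}} + \Xi^{\mathcal{E}}\bigr).
\]
This requires two pieces: (a) the contribution to $Z(\lam)$ from independent sets whose odd defect set has a $2$-linked component of size $> k_0$ (and similarly for the even side) is $o\bigl((1+\lam)^N\bigr)$, and (b) the overlap coming from independent sets whose defects are small on both sides is $o(Z(\lam))$. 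Part (a) is the heart of the matter: one applies Sapozhenko's graph-container estimate for the number of $D \subseteq \mathcal{O}$ with a size-$k$ $2$-linked component and a given value of $|N(D)|$, and compares against the weight $\lam^{|D|}(1+\lam)^{-|N(D)|}$ using hypercube isoperimetry. Part (b) is a simpler balancing argument against the hard-core measure. By the bit-flip symmetry of $Q_d$, $\Xi^{\mathcal{O}} = \Xi^{\mathcal{E}}$, producing the factor $2$.

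The second step is cluster expansion on $\log \Xi^{\mathcal{O}}$. The two quantitative inputs are: (i) a connected polymer $\gamma$ of size $k$ in the distance-$2$ graph satisfies $|N(\gamma)| \ge (d-2)k+2$, since any two vertices of $\mathcal{O}$ at $Q_d$-distance $2$ share exactly two neighbors, and (ii) the number of connected size-$k$ subsets of the distance-$2$ graph through a fixed vertex is at most $\bigl(e\binom{d}{2}\bigr)^{k-1}$ by the standard tree-count bound. Together these verify the Koteck\'y--Preiss criterion, and the cluster expansion yields
\[
\log \Xi^{\mathcal{O}} \;=\; \lam N (1+\lam)^{-d} \;+\; \sum_{k\ge 2}\text{(size-}k\text{ cluster terms)}.
\]
The dominant error is the size-$2$ term, bounded by $O\bigl(N d^2 \lam^2 (1+\lam)^{-(2d-2)}\bigr) = O\bigl(d^2\lam^2\,(2/(1+\lam)^2)^{d-1}\bigr)$, which is $o(1)$ iff $(1+\lam)^2 > 2$, i.e.\ $\lam > \sqrt{2}-1$---precisely Galvin's threshold---and higher-order cluster terms are suppressed by further geometric factors. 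Exponentiating and combining with the reduction above gives $Z(\lam) = (2+o(1))(1+\lam)^N \exp\bigl(\lam N(1+\lam)^{-d}\bigr)$.

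\emph{Main obstacle.} I expect the cluster-expansion bookkeeping to be essentially routine once the polymer model is set up; the real work lies in the Peierls reduction, particularly part (a). This is where the condition $\lam > \sqrt{2}-1$ enters essentially: the Sapozhenko container bound on ``large'' $2$-linked odd-defect components becomes tight precisely against the weight $\lam^{|D|}(1+\lam)^{-|N(D)|}$ at this threshold, and any weaker container bound would fail to absorb the $2^{N}$-type entropy of bad configurations.
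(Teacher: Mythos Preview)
Your outline is the polymer-model/cluster-expansion route that the paper itself relies on. The paper does not prove Theorem~\ref{thmGalvinZ} separately---it is cited as Galvin's result---but it follows at once as the $t=2$ case of Theorem~\ref{thmJPZ}, which in turn is obtained from the polymer approximation (Theorem~\ref{lemPolyApprox}) together with the cluster-expansion tail bounds (Theorem~\ref{thm:clusterexp}), using $R_1=\lam$. So the strategy is right; two points need correction.

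First, your isoperimetric input has the inequality reversed. The argument ``any two odd vertices at $Q_d$-distance $2$ share exactly two neighbours'' yields, by adding vertices one at a time while maintaining $2$-connectedness, the \emph{upper} bound $|N(\gamma)| \le (d-2)k+2$, not a lower bound. Concretely, in $Q_4$ the three even vertices $0000,1100,1010$ are pairwise at distance $2$ and have $|N(\gamma)|=7<(d-2)\cdot 3+2=8$. For the Koteck\'y--Preiss verification you need a lower bound; the one that works for small $k$ is $|N(\gamma)| \ge dk - 2\binom{k}{2}$ (any two odd vertices share at most two common neighbours), and this is what drives the $(1+\lam)^{-dk}$ decay recorded in Theorem~\ref{thm:clusterexp}.

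Second, your ``Main obstacle'' paragraph misplaces the threshold $\lam>\sqrt 2-1$. The Peierls/container reduction---your part~(a)---holds for all $\lam \ge C\log d/d^{1/3}$, as in Theorem~\ref{lemPolyApprox}; the Sapozhenko container machinery is spent there but never sees $\sqrt 2-1$. The threshold arises exactly where you first located it, in the cluster expansion: the size-$2$ cluster contribution is of order $N d^2(1+\lam)^{-2d}$, which is $o(1)$ iff $(1+\lam)^2>2$. For smaller $\lam$ the reduction to the polymer model still holds, and one simply retains more terms of the expansion, which is precisely the content of Theorem~\ref{thmJPZ}.
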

Analogously to  Theorem~\ref{thmSapKors}, the trivial lower bound for $Z(\lam)$ is $2 (1+\lam)^N -1$, and  the asymptotic formula in Theorem~\ref{thmGalvinZ} includes the contribution from independent sets with a constant number of defect vertices, captured by the exponential factor. 

Galvin also studied the typical structure of the defect vertices under the probability distribution $\mu_{\lam}$. Formally, given an independent set $I\in \cI$, if $|I\cap\cO|\leq |I\cap\cE|$, we refer to the elements of $I\cap\cO$ as the \emph{defect} vertices of $I$; otherwise we say that $I\cap\cE$ is the set of defect vertices.  A natural way to describe the structure of a set $S\subset\cO, \cE$ is to describe the graph $Q_d^2[S]$ where $Q_d^2$ denotes the square of $Q_d$. Given an independent set $I$ with defect vertices $S$, we refer to the connected components of $Q_d^2[S]$ as the \emph{defects} of $I$. Galvin showed that for $\lam>\sqrt{2}-1$, all but a  vanishing fraction of $Z(\lam)$ comes from independent sets with defects  of size at most $1$. 

Recently, the first two authors found the asymptotics of $Z(\lam)$ for all fixed $\lam>0$~\cite{jenssen2020independent}.  The asymptotic formula takes into account defects of arbitrary, but constant size.  The smaller $\lam$ is, the larger the size of defects that must be considered. 
\begin{theorem}[Jenssen and Perkins]
\label{thmJPZ}
There is a sequence of polynomials $R_j(d,\lam)$, $j\in \mathbb N$, such that for any fixed $t\ge 1$ and $\lam > 2^{1/t} -1$, 
\[ Z(\lam) = (2+o(1)) (1+\lam)^N \exp \left[ N  \sum_{j=1}^{t-1} R_j(d,\lam) (1+\lam)^{-dj}  \right ]  \]
as $d \to \infty$. 
Moreover the coefficients of the polynomial $R_j$ can be computed in time $e^{O( j \log j)}$.
\end{theorem}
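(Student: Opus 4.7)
The plan is to apply polymer-model and cluster-expansion machinery from statistical physics. Because $Q_d$ is bipartite with sides $\cE,\cO$ of size $N$, any independent set decomposes as $A\cup B$ with $A\subset\cE$, $B\subset\cO$. Let $\widetilde Z(\lam)$ denote the sum of $\lam^{|I|}$ over independent sets with $\cE$ as (weak) majority, i.e.\ $|B|\le|A|$; by the $\cE\leftrightarrow\cO$ symmetry $Z(\lam)=(2+o(1))\widetilde Z(\lam)$, where the error handles tied configurations and is exponentially small. Summing first over $B$ and then over $A\subset\cE\setminus N(B)$ gives
\[
\widetilde Z(\lam)=(1+\lam)^{N}\sum_{B\subset\cO,\,|B|\le N/2}\lam^{|B|}(1+\lam)^{-|N(B)|}.
\]
A further Sapozhenko-style restriction to $|B|\le\alpha N$ for a small absolute constant $\alpha$ changes this by only an exponentially small relative error. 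Since distinct connected components of $B$ in $Q_d^{2}$ have disjoint $\cE$-neighborhoods, the restricted sum factorizes into the partition function $\Xi$ of an abstract polymer model whose polymers are $2$-linked subsets $\gamma\subset\cO$ of size at most $\alpha N$, with weight $w(\gamma)=\lam^{|\gamma|}(1+\lam)^{-|N(\gamma)|}$ and compatibility given by non-adjacency in $Q_d^{2}$.

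I would then run the abstract cluster expansion: $\log\Xi=\sum_{\Gamma}\phi(\Gamma)\prod_{\gamma\in\Gamma}w(\gamma)$, a sum over connected multisets $\Gamma$ of polymers with $\phi$ the Ursell function. Absolute convergence would follow from the Koteck\'y--Preiss criterion, verified using $|N(\gamma)|\ge d|\gamma|-O(|\gamma|^{2})$ for $2$-linked polymers together with the $e^{O(k\log k)}$ bound on the number of rooted $2$-linked $k$-subsets of $Q_d^{2}$ (a standard tree-counting argument).

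The structural step is to reorganize this convergent expansion by total vertex size $|V(\Gamma)|:=\sum_{i}|\gamma_i|$. For a cluster of total vertex size $j$, the weight factors as $\lam^{j}(1+\lam)^{\,dj-|N(\cup\gamma_i)|}\cdot(1+\lam)^{-dj}$ times an Ursell coefficient. The overlap $dj-|N(\cup\gamma_i)|$ is bounded in terms of $j$ alone (it counts coincidences among the $dj$ edge-ends at the polymer vertices), and vertex-transitivity makes the number of embeddings of any fixed ``shape'' into $Q_d$ a polynomial in $d$ of degree at most $j$. Summing shape by shape therefore presents the total contribution of clusters with $|V(\Gamma)|=j$ as $N\,R_j(d,\lam)\,(1+\lam)^{-dj}$ with $R_j$ a polynomial in $d$ and $\lam$. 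Truncating the expansion at $j=t-1$ leaves a tail of order $N(1+\lam)^{-dt}$ (with polynomial prefactors), which is $o(1)$ precisely when $(1+\lam)^{t}>2$, i.e.\ when $\lam>2^{1/t}-1$, matching the hypothesis. The algorithmic claim follows because there are $e^{O(j\log j)}$ cluster shapes of vertex size $j$, and for each shape the Ursell factor, the overlap exponent, and the embedding polynomial in $d$ are computable in time polynomial in $j$.

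The hardest part is the reduction to the truncated polymer model with only an $o(1)$ relative error. For Galvin's range $\lam>\sqrt{2}-1$ (Theorem~\ref{thmGalvinZ}) it suffices to keep single-vertex defects and apply standard Sapozhenko enumeration; for arbitrary $\lam>0$ one must control defect components of arbitrarily large sub-linear size and sharpen the cut/container estimates so that the non-polymer tail is exponentially smaller than the main term uniformly down to the boundary $(1+\lam)^{t}=2$. Once this reduction is in place, verifying Koteck\'y--Preiss, identifying the polynomials $R_j$, and bounding the complexity of their computation are careful but largely mechanical.
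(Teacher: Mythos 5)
Your proposal follows essentially the same route as the paper: reduce $Z(\lam)$ to twice the odd polymer model partition function $(1+\lam)^N\Xi_\cO$ via a Sapozhenko-style container argument (the paper's Theorem~\ref{thm:TV}), then truncate the convergent cluster expansion of $\log\Xi_\cO$, organized by total cluster size $\|\Gamma\|$, to obtain the polynomials $R_j$ with tail $O(2^d d^{2(t-1)}(1+\lam)^{-dt})=o(1)$ exactly when $\lam>2^{1/t}-1$ (the paper's Theorem~\ref{thm:clusterexp}). You also correctly identify the container reduction for small $\lam$ as the genuinely hard step, so the outline matches the paper's argument.
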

In particular, $R_1 =\lam$, recovering the formula in Theorem~\ref{thmGalvinZ}. 

Given these results it is natural to ask for the asymptotics of $i_m(Q_d)$,   the number of independent sets of size $m$ in $Q_d$.   There is a trivial lower bound of $i_m(Q_d) \ge 2 \binom{N}{m}$, obtained by considering independent sets composed entirely of even or odd vertices, but depending on how large $m$ is, we may need to take into account independent sets of size $m$ with defects up to a  given size.  

Galvin~\cite{galvin2012independent} gave the  asymptotics of $i_m(Q_d)$ in the range for which almost all independent sets of size $m$ contain defects of size at most $1$. 

\begin{theorem}[Galvin]
\label{thmGalvinK}
Fix  $\beta \in (1-1/\sqrt{2}, 1)$ and let  $\lam = \frac{\beta}{1-\beta}$.  Then
\begin{align}
\label{eqGalvin1}
 i_{\lfloor \beta N \rfloor}(Q_d) &= (2+o(1)) \binom{N}{ \lfloor \beta N \rfloor} \exp \left [ \lam N  \left(\frac{1}{1+\lam} \right)^{d}  \right] \,. 
\end{align}
as $d \to \infty$. 
\end{theorem}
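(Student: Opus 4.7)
The plan is to extract the asymptotics of $i_m(Q_d)$ from those of $Z(\lam)$ in Theorem~\ref{thmGalvinZ} via a local central limit theorem (LCLT) for $|I|$ under the hard-core measure $\mu_\lam$. Set $\lam^\ast := \beta/(1-\beta)$, which exceeds $\sqrt{2}-1$ precisely when $\beta > 1 - 1/\sqrt{2}$, so we are in the regime of Theorem~\ref{thmGalvinZ}. The basic identity is
\[
i_m(Q_d) = [\lam^m]Z(\lam) = \lam^{-m} Z(\lam)\, \mu_\lam(|I|=m),
\]
so with $\lam = \lam^\ast$ and $m=\lfloor \beta N\rfloor$ the result reduces to (i) asymptotics for $Z(\lam^\ast)$, (ii) Stirling's formula, and (iii) an LCLT statement $\mu_{\lam^\ast}(|I|=m) = (1+o(1))/\sqrt{2\pi\beta(1-\beta)N}$ at the mean value.

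First I would verify the algebra of step (ii): from $1+\lam^\ast = 1/(1-\beta)$ and Stirling applied to $\binom{N}{m}$ at $m=\lfloor\beta N\rfloor$, we get
\[
\lam^{\ast -m}(1+\lam^\ast)^N = \beta^{-m}(1-\beta)^{-(N-m)} = (1+o(1))\sqrt{2\pi \beta(1-\beta)N}\,\binom{N}{m}.
\]
Combined with Theorem~\ref{thmGalvinZ}, this yields
\[
\lam^{\ast -m}Z(\lam^\ast) = (2+o(1))\sqrt{2\pi \beta(1-\beta)N}\,\binom{N}{m}\exp\!\bigl[\lam^\ast N(1+\lam^\ast)^{-d}\bigr],
\]
which, multiplied by the conjectured LCLT density, recovers \eqref{eqGalvin1} exactly. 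Step (ii) also requires a variance computation: for $\lam^\ast > \sqrt{2}-1$, $|I|$ under $\mu_{\lam^\ast}$ is an essentially even mixture of two copies of (approximately) $\bin(N,\beta)$ coming from the two choices of dominant side of the bipartition, plus lower-order defect contributions controlled by Galvin's structural result. Since the two mixture components have the same mean, $\var_{\mu_{\lam^\ast}}(|I|) = (1+o(1))\beta(1-\beta)N$.

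The main obstacle is proving the LCLT itself. I would use the Fourier-analytic route: write
\[
\mu_{\lam^\ast}(|I|=m) = \frac{1}{2\pi}\int_{-\pi}^{\pi} e^{-itm}\,\frac{Z(\lam^\ast e^{it})}{Z(\lam^\ast)}\,dt,
\]
and split the integral into a central window $|t|\le N^{-1/2}\log N$ and a tail. In the central window one uses a Gaussian approximation to $Z(\lam^\ast e^{it})/Z(\lam^\ast)$ via the mean and variance computed above; the key contribution is
\[
\frac{Z(\lam^\ast e^{it})}{Z(\lam^\ast)} \sim e^{it \E|I|}\exp\!\left(-\tfrac{1}{2}t^2\var|I|\right),
\]
which after substituting $m = \E|I| + O(1)$ produces the claimed density. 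The delicate part is the tail: one must show that $|Z(\lam^\ast e^{it})|/Z(\lam^\ast)$ is exponentially small (in $N$) for $|t|$ bounded away from $0$ (and from $\pm\pi$, where one must separately exclude a possible near-periodicity). Here I would invoke the polymer / cluster-expansion representation that supports Theorem~\ref{thmGalvinZ}: $\log Z(\lam)$ splits as an explicit analytic "boundary" contribution $N\log(1+\lam)$ plus a rapidly convergent cluster series in $1/(1+\lam)^d$. Replacing $\lam$ by $\lam^\ast e^{it}$, the modulus of the dominant $(1+\lam^\ast e^{it})^N$ factor is $(|1+\lam^\ast e^{it}|/(1+\lam^\ast))^N$, which is strictly less than $1$ outside a neighborhood of $t=0$, giving the required exponential decay, while the cluster sum remains uniformly bounded.

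In summary, the proof has three ingredients: (a) matching $\lam^\ast$ so that $\E_{\mu_{\lam^\ast}}|I| = \beta N + O(1)$ and the $Z(\lam^\ast)$ asymptotic dovetails with Stirling, (b) a variance computation using the near-independence of defects (Galvin's structure theorem), and (c) a Fourier LCLT whose tail bound is provided by the polymer/cluster expansion controlling $Z(\lam^\ast e^{it})$. Step (c) is where essentially all of the technical work lies; the rest is bookkeeping.
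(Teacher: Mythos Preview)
Your overall strategy---the identity $i_m = \lambda^{-m} Z(\lambda)\,\mu_\lambda(|\mathbf I|=m)$ with $\lambda = \beta/(1-\beta)$, combined with the asymptotics of $Z(\lambda)$, Stirling's formula, and a local CLT for $|\mathbf I|$---is exactly the route the paper takes (Theorem~\ref{thmFixedSizelam} and Lemmas~\ref{lemimapprox}--\ref{lemPickLam}, specialised to $t=2$, $r=0$).  One small correction: $\E_{\lambda^*}|\mathbf I| = \beta N + o(N^{1/2})$, not $\beta N + O(1)$; the leading defect correction is of order $Nd(1-\beta)^d$, which for fixed $\beta>1-1/\sqrt2$ is $o(N^{1/2})$ but need not be bounded.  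This is harmless, since $o(N^{1/2})$ is all the LCLT requires.

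Where you diverge from the paper is in the LCLT itself.  The paper does \emph{not} Fourier-analyse $Z(\lambda e^{it})$.  It first passes to the odd polymer measure $\mu_{\cO,\lambda}$ via the total-variation bound of Theorem~\ref{lemPolyApprox}, then exploits the two-step structure of Definition~\ref{defmu}: conditional on the polymer configuration $\mathbf\Gamma$, one has $|\mathbf I| = \|\mathbf\Gamma\| + \mathrm{Bin}\bigl(N-|N(\mathbf\Gamma)|,\,\lambda/(1+\lambda)\bigr)$.  A CLT for $\|\mathbf\Gamma\|$ and $|N(\mathbf\Gamma)|$ (Lemma~\ref{lemGamCLT}, proved from real-$\lambda$ cumulant bounds) shows $\mathbf\Gamma$ is typically within $o(N^{1/2})$ of its mean, after which the elementary binomial LCLT (Lemma~\ref{lemShiftBinomial}) finishes the job.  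No complex fugacities enter.

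Your Fourier route can be made to work, but the tail argument you sketch has a genuine gap.  You propose to factor $Z_\cO(\lambda^* e^{it}) = (1+\lambda^* e^{it})^N\,\Xi_\cO(\lambda^* e^{it})$, get exponential decay from the first factor, and assert that ``the cluster sum remains uniformly bounded.''  But the polymer weight at complex fugacity has modulus $\lambda^{|S|}/|1+\lambda e^{it}|^{|N(S)|}$, which \emph{grows} as $t$ moves away from $0$; for $\beta\in(1-1/\sqrt2,\,2/3)$ there is an interval of $t$ near $\pi$ where $|1+\lambda e^{it}|\le 1$ and the cluster expansion does not converge at all (indeed $\Xi_\cO$ has a pole of high order at $\lambda e^{it}=-1$).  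The decay of $(1+\lambda e^{it})^N$ and the blow-up of $\Xi_\cO$ can cancel, so the factorisation by itself yields no tail bound.  The clean fix is essentially the paper's conditioning move: writing
\[
\varphi(t) \;=\; \E_{\mathbf\Gamma}\!\left[e^{it\|\mathbf\Gamma\|}\Bigl(\tfrac{1+\lambda e^{it}}{1+\lambda}\Bigr)^{N-|N(\mathbf\Gamma)|}\right]
\]
and using the large-deviation bound $|N(\mathbf\Gamma)|\le 2N/d$ with probability $1-e^{-\Omega(N/d^4)}$ gives $|\varphi(t)|\le \bigl(|1+\lambda e^{it}|/(1+\lambda)\bigr)^{N(1-o(1))}+e^{-\Omega(N/d^4)}$ uniformly in $t\in[-\pi,\pi]$, which is the tail bound you need.
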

Note that the asymptotic formula~\eqref{eqGalvin1} consists of the trivial lower bound $2 \binom{N}{ \lfloor \beta N \rfloor}$ multiplied by the same exponential correction factor in the asymptotic formula for $Z(\lam)$ in the range $\lam > \sqrt{2}-1$.   

We  show that a similar, but more complicated, formula holds for all $\beta >0$.  In particular,  when $\beta< 1- 1/\sqrt{2}$ the formula is \textit{not} simply the trivial bound multiplied by the appropriate exponential correction factor from Theorem~\ref{thmJPZ}.  We explain below where the extra complexity arises. 
\begin{theorem}
\label{thmFixedSize}
There is a sequence of rational functions $P_j(d,\beta)$, $j\in\mathbb N$, such that 
so that for any fixed $t\ge 1$ and $\beta  \in (1-2^{-1/t},1)$,
\begin{align}
 i_{\lfloor \beta N \rfloor}(Q_d) &= (2+o(1)) \binom{N}{ \lfloor \beta N \rfloor}  \exp \left[ N \sum_{j=1}^{t-1} P_j (d,\beta) \cdot (1-\beta)^{jd} \right] 
  \end{align}
 as $d \to \infty$. Moreover the coefficients of $P_j$ can be computed in time $e^{O( j \log j)}$.
 \end{theorem}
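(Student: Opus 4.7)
The strategy is to extract $i_m$ from the hard-core partition function via the basic identity
\[ i_m = \lam^{-m}\, Z(\lam)\, \mu_\lam(|I|=m), \]
valid for any $\lam > 0$, and choose $\lam = \lam^*$ at the saddle point where $\E_{\lam^*}[|I|] = m$. The zeroth-order solution is $\lam_0 := \beta/(1-\beta)$, which gives mean exactly $\beta N$ in the defect-free approximation; Theorem~\ref{thmJPZ} shows the true saddle differs by $O((1-\beta)^d)$ and can be expanded as $\lam^* = \lam_0 + \sum_{j\ge 1} c_j(d,\beta)(1-\beta)^{jd}$ for rational functions $c_j$ obtained by differentiating the $R_j$. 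In the range $\beta \in (1-2^{-1/t},1)$ we have $\lam^* > 2^{1/t}-1$, so Theorem~\ref{thmJPZ} yields $t$ terms of the expansion of $Z(\lam^*)$.

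The principal new ingredient, and the main obstacle, is a local central limit theorem for $|I|$ under $\mu_{\lam^*}$:
\[ \mu_{\lam^*}(|I|=m) = \frac{1+o(1)}{\sqrt{2\pi \sigma^2}}, \qquad \sigma^2 := \var_{\lam^*}(|I|) = (1+o(1))\, N\beta(1-\beta). \]
I would prove this by Fourier inversion, writing $\mu_{\lam^*}(|I|=m) = (2\pi)^{-1}\int_{-\pi}^{\pi} e^{-itm} Z(\lam^* e^{it})/Z(\lam^*)\, dt$. For $|t|$ small, the cluster expansion from \cite{jenssen2020independent} extends analytically into a complex neighborhood of $\lam^*$ and produces the Gaussian main contribution $\exp(-\sigma^2 t^2/2)$. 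The hard part is controlling $|Z(\lam^* e^{it})/Z(\lam^*)|$ for $|t|$ away from $0$, where cluster-expansion convergence degrades. The plan is to exploit the nearly-bipartite defect structure: condition on the defect side and the defect set $D$, an event of probability $1-o(1)$ with $|D|,|N(D)| = o(N)$ by the cluster expansion. Conditionally, $|I| = |D| + \bin(|\cE\setminus N(D)|,\, p)$ with $p = \lam^*/(1+\lam^*)$, whose characteristic function has explicit modulus $|(1-p)+pe^{it}|^{|\cE\setminus N(D)|} = (1-2p(1-p)(1-\cos t))^{|\cE\setminus N(D)|/2}$, decaying as $e^{-\Omega(N t^2)}$ uniformly in $t$. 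Integrating out $D$ and showing atypical $D$ contribute negligibly yields the required bound.

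Given the LCLT and Theorem~\ref{thmJPZ},
\[ i_m = (1+o(1))\, \frac{2(1+\lam^*)^N}{(\lam^*)^m\, \sqrt{2\pi\sigma^2}} \exp\!\left[ N \sum_{j=1}^{t-1} R_j(d,\lam^*)(1+\lam^*)^{-jd} \right]. \]
Stirling's formula gives $\binom{N}{m}^{-1} = (1+o(1))\sqrt{2\pi N\beta(1-\beta)}\, \beta^{\beta N}(1-\beta)^{(1-\beta)N}$, which absorbs the $\sqrt{\sigma^2}$ prefactor and the leading power $(1+\lam_0)^N/\lam_0^m = \beta^{-\beta N}(1-\beta)^{-(1-\beta)N}$. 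The remaining adjustments---namely $(\lam_0/\lam^*)^m$, $((1+\lam^*)/(1+\lam_0))^N$, and the series $R_j(d,\lam^*)(1+\lam^*)^{-jd}$, all re-expanded in powers of $(1-\beta)^d$ using $(1+\lam_0)^{-1} = 1-\beta$ together with the perturbative expansion of $\lam^*$---combine after routine collection to produce the rational functions $P_j(d,\beta)$. The computability bound $e^{O(j\log j)}$ on the coefficients of $P_j$ is inherited directly from the corresponding bound for $R_j$ in Theorem~\ref{thmJPZ}.
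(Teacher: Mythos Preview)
Your outline matches the paper's: use $i_m = \lam^{-m} Z(\lam)\,\mu_\lam(|I|=m)$, tune $\lam$ to $\beta/(1-\beta)$ plus corrections in powers of $(1-\beta)^d$, prove a local CLT for $|I|$, and extract $\binom{N}{m}$ via Stirling. The one substantive difference is how the LCLT is obtained. You go through Fourier inversion on $Z(\lam e^{it})/Z(\lam)$, treating small $t$ by analytic continuation of the cluster expansion into complex $\lam$ and large $t$ by conditioning on the defect configuration to expose binomial characteristic-function decay. The paper sidesteps Fourier and complex fugacities entirely: after passing to the polymer measure $\mu_{\cO,\lam}$ via Theorem~\ref{lemPolyApprox}, it conditions on the polymer configuration $\mathbf\Gamma$ and observes that $|I|-\|\mathbf\Gamma\|$ is then \emph{exactly} $\bin(N-|N(\mathbf\Gamma)|,\lam/(1+\lam))$, so the elementary binomial local CLT together with a CLT for $\|\mathbf\Gamma\|$ and $|N(\mathbf\Gamma)|$ (Lemma~\ref{lemGamCLT}, proved from cluster-expansion cumulant bounds) finishes the job. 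Your large-$t$ argument is precisely this conditioning, so the Fourier detour and the analytic-continuation step (which is not provided by~\cite{jenssen2020independent} and would need separate justification) are avoidable. The paper also notes that one only needs $|\E|I|-m|=o(N^{1/2})$ rather than the exact saddle, which lets $\lam_\beta$ be taken as a finite truncation with just $\lceil t/2\rceil-1$ correction terms.
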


 For small values of $j$ the functions $P_j$ can be computed by hand.  For example, $P_1 = \frac{\beta}{1-\beta}$, and taking $t=2$ in Theorem~\ref{thmFixedSize} recovers Galvin's Theorem~\ref{thmGalvinK}.   A more involved calculation carried out in Section~\ref{secGivenSize} yields

\begin{align*}
P_2= \frac{ d(d - 1)  ( 2- \b ) \b^3- 2(1- \b)^2 \b^2}{4(1-\b)^4}-\frac{\beta(1-d\b)^2}{2(1-\b)^3}\, .
\end{align*}

By Theorem~\ref{thmFixedSize} this gives an explicit asymptotic formula for $i_{\lfloor \beta N \rfloor}(Q_d)$ for  $\beta>1-2^{-1/3}$:

{\scriptsize
\begin{align*}
 i_{\lfloor \beta N \rfloor}(Q_d)
  \sim 2 \binom{N}{ \lfloor \beta N \rfloor} \exp \left[ N \frac{\beta}{1-\beta} (1-\beta)^d  
 + N \left( \frac{ d(d - 1)  ( 2- \b ) \b^3- 2(1- \b)^2 \b^2}{4(1-\b)^4}-\frac{\beta(1-d\b)^2}{2(1-\b)^3} \right)  (1-\beta)^{2d}  \right ] \,.
\end{align*}
}

In principle, one can continue to compute $P_3, P_4, \dots$ and obtain explicit asymptotics for any fixed $\beta$.   More generally, the results of~\cite{jenssen2020independent} and of this paper hold for much smaller $\lam$ and $\beta$, tending to $0$ as $d \to \infty$, as long as $\lam \ge C \log d/d^{1/3}$ and $\beta > C \log d /d^{1/3}$ for an absolute constant $C$.  In this case, however, the asymptotic formulas in Theorem~\ref{thmJPZ} and Theorem~\ref{thmFixedSize} become series with a number of terms that grows with $d$.  These series can be used to give an algorithm to approximate $Z(\lam)$ and $i_{\lfloor \beta N \rfloor}(Q_d)$ up to a $(1+\eps)$ multiplicative factor in time polynomial in $1/\eps $ and $N$ (an FPTAS in the language of approximate counting; see e.g.~\cite{JenssenAlgorithmsJ} for such an algorithm for independent sets in expander graphs).  This raises an interesting question of what it means to determine the asymptotics of a sequence $f(d)$ as $d \to \infty$.  Evaluating a closed-form expression involving, say, exponentials or logarithms, might also  involve truncating a power series,  and so in a sense an algorithmic definition is natural.  We do not pursue this further here and instead stick with $\beta$ constant.

The proof of Theorem~\ref{thmFixedSize} makes use of the following simple yet  useful identity.  Let $\cI_m = \{ I \in \cI : | I | = m\}$ so that $i_m(Q_d) = | \cI_m|$.  For $m\in\mathbb N$ and $\lam>0$,
\begin{equation}
\label{eqIkIdentity}
i_m(Q_d)=\frac{Z(\lam)}{\lam^m} \mu_\lam( \cI_m) \,.
\end{equation}
In fact this formula follows from the definition of $\mu_\lam$ and so holds for any graph, not just $Q_d$. To use~\eqref{eqIkIdentity} along with Theorem~\ref{thmJPZ}  to derive asymptotics for $i_m(Q_d)$, we must compute the asymptotics of $ \mu_\lam( \cI_m)$.  The feasibility of doing this depends very much  on $m$ and  the choice of $\lam$.  By choosing $\lambda$ so that the expected size of an independent set drawn from $\mu_{\lam}$ is approximately $m$, we can compute  the asymptotics of $ \mu_\lam( \cI_m)$  using a local central limit theorem.  In practice, we do not work with the hard-core model directly, but with an approximating measure derived from a polymer model which describes the distribution on defects in an independent set from the hard-core model (see Section~\ref{secPolymers}). This polymer model was  introduced in \cite{jenssen2020independent}. 

In the next theorem we give an expansion in $(1-\beta)^d$ for a value of the fugacity $\lam$ for which the expected size of $\mathbf I$, a random sample from the hard-core model, is close to $\lfloor \beta N \rfloor$.   By expanding the formula~\eqref{eqIkFormula} below and combining this with~\eqref{eqLamBetaDef} we obtain Theorem~\ref{thmFixedSize}.
\begin{theorem}
\label{thmFixedSizelam}
There exists a sequence of rational functions $B_j(d,\beta)$, $j\in \mathbb N$, such that the coefficients of $B_j$ can be computed in time $e^{O( j \log j)}$ and the following holds.
Fix $\beta  \in (0,1)$ and let $t\ge 1$ be such that $\beta> 1-2^{-1/t}$ and let $r=\lceil t/2 \rceil-1$.  Then with
\begin{align}
\label{eqLamBetaDef}
\lam_{\beta} &= \frac{\beta}{1-\beta} + \sum_{j=1}^{r} B_j (d,\beta) \cdot (1-\beta)^{jd}
\end{align}
we have
\begin{align}
\label{eqIkFormula}
 i_{\lfloor \beta N \rfloor}(Q_d) 
= \frac{ 1+o(1)}{    \sqrt{2\pi N \beta (1-\beta)} }
  \frac{Z(\lam_{\beta})}{\lam_{\beta}^{\lfloor \beta N \rfloor  } } 
  \end{align}
 as $d \to \infty$.
 Moreover,
 \[ \left|  \E _{\lam_{\beta}}  | \mathbf I | -  \lfloor \beta N \rfloor \right|   = o(N^{1/2}) \,. \]
 \end{theorem}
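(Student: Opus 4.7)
The plan is to use identity~\eqref{eqIkIdentity} at a value $\lam = \lam_\beta$ for which $\E_{\lam_\beta}|\mathbf I|$ is within $o(\sqrt N)$ of $\lfloor \beta N \rfloor$, and to convert the resulting probability $\mu_{\lam_\beta}(\cI_{\lfloor\beta N\rfloor})$ into an explicit prefactor via a local central limit theorem (LCLT) for $|\mathbf I|$ under $\mu_{\lam_\beta}$. Assuming the variance satisfies $\var_{\lam_\beta} |\mathbf I| = (1+o(1)) N\beta(1-\beta)$, the LCLT yields $\mu_{\lam_\beta}(\cI_{\lfloor\beta N\rfloor}) = (1+o(1))/\sqrt{2\pi N\beta(1-\beta)}$, and combining this with~\eqref{eqIkIdentity} gives~\eqref{eqIkFormula}.

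To construct $\lam_\beta$ I would use the identity $\E_\lam|\mathbf I| = \lam \partial_\lam \log Z(\lam)$. Since the asymptotic formula in Theorem~\ref{thmJPZ} comes from an absolutely convergent cluster expansion of the polymer model of~\cite{jenssen2020independent}, it can be differentiated term-by-term in $\log\lam$ to give
\[ \E_\lam |\mathbf I| = \frac{\lam N}{1+\lam} + N\sum_{j=1}^{s-1} Q_j(d,\lam)(1+\lam)^{-jd} + O\!\left(N(1+\lam)^{-sd}\right) \]
with rational $Q_j$ whose coefficients can be computed in time $e^{O(j\log j)}$. Setting this equal to $\lfloor \beta N\rfloor$ and inverting order-by-order in $(1-\beta)^d$, with leading solution $\lam = \beta/(1-\beta)$, uniquely determines the $B_j$ via a triangular system of rational equations, inheriting the $e^{O(j\log j)}$ time bound. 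Truncating at $r = \lceil t/2\rceil - 1$, the first omitted term contributes an error of order $N(1-\beta)^{(r+1)d}$ to $\E_{\lam_\beta}|\mathbf I|$. The strict inequality $\beta > 1-2^{-1/t}$ gives $(1-\beta)^d \le 2^{-d/t - \Omega(d)}$, so this error is at most $2^{d[1-(r+1)/t] - \Omega(d)}$; since $(r+1)/t \ge 1/2$ for $r = \lceil t/2\rceil - 1$, it is $o(N^{1/2})$, establishing the final assertion of the theorem.

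The remaining input is the LCLT for $|\mathbf I|$ under $\mu_{\lam_\beta}$, which I would derive as the one-dimensional marginal of the multivariate LCLT for structural features announced in the abstract. The strategy is to pass to the polymer representation of $\mu_\lam$ from~\cite{jenssen2020independent}: informally $|\mathbf I|$ decomposes as a sum of a Bernoulli$(\beta)$-like bulk variable on the majority side of the bipartition (contributing mean $(1+o(1))\beta N$ and variance $(1+o(1))N\beta(1-\beta)$) together with a polymer correction of much smaller variance. The absolute convergence of the cluster expansion furnishes uniform control on the cumulants of $|\mathbf I|$ and exponential bounds on its characteristic function away from the origin, from which the pointwise estimate $\P_{\lam_\beta}(|\mathbf I| = m) = (1+o(1))/\sqrt{2\pi \var_{\lam_\beta}|\mathbf I|}$ follows by Fourier inversion, uniformly for $m$ within $o(\sqrt N)$ of the mean.

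The main obstacle is that every error bound must be sharpened to the pointwise $o(1/\sqrt N)$ scale rather than the $(1+o(1))$-multiplicative scale at which Theorem~\ref{thmJPZ} operates: one cannot simply substitute asymptotic equivalents into the ratio $Z(\lam_\beta)/\lam_\beta^{\lfloor\beta N\rfloor}$ since subleading corrections affect the prefactor. This forces the polymer analysis to be carried one order further than in~\cite{jenssen2020independent}, and relies crucially on the exponential $2^{-\Omega(d)}$ slack provided by the strict inequality $\beta > 1-2^{-1/t}$ to absorb the constants hidden in the cluster-expansion error estimates.
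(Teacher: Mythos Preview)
Your overall strategy matches the paper's: construct $\lam_\beta$ by inverting a truncated cluster-expanded formula for $\E_\lam|\mathbf I|$ order-by-order in $(1-\beta)^d$, then feed the resulting near-centering into identity~\eqref{eqIkIdentity} together with an LCLT for $|\mathbf I|$. Your error analysis for the truncation at $r=\lceil t/2\rceil-1$ is also the paper's.

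The one substantive difference is in how the LCLT for $|\mathbf I|$ is obtained. It is \emph{not} a marginal of the multivariate LCLT ``for structural features'' (that theorem concerns the defect counts $X_T$, not $|\mathbf I|$), and the paper does not prove a direct Fourier-inversion LCLT for $|\mathbf I|$ either. Instead the paper first passes, via the total-variation bound of Theorem~\ref{lemPolyApprox}, from $\mu_\lam$ to the one-sided polymer measure $\mu_{\cO,\lam}$ (this is Lemma~\ref{lemimapprox}), and then exploits the two-step description of $\mu_{\cO,\lam}$: conditionally on the polymer configuration $\mathbf\Gamma$ one has $|\mathbf I|=\|\mathbf\Gamma\|+\text{Bin}\bigl(N-|N(\mathbf\Gamma)|,\,\lam/(1+\lam)\bigr)$. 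A \emph{CLT} (not LCLT) for $\|\mathbf\Gamma\|$ and $|N(\mathbf\Gamma)|$, proved from cluster-expansion cumulant bounds, pins these near their means with high probability, and then the elementary binomial LCLT supplies the pointwise $1/\sqrt{2\pi N\beta(1-\beta)}$ (Lemma~\ref{lemImZ}). This conditioning route is lighter than your proposed direct characteristic-function argument: the ``hard'' polymer randomness only needs CLT-scale control, and all the LCLT precision is borrowed from the binomial. Your approach would also work but requires carrying the full Fourier analysis on a non-independent sum.
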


In~\cite{jenssen2020independent} the authors prove a multivariate central limit theorem for the number of defects of different types in the polymer model.  In Section~\ref{secLCLT}, we establish a multivariate \emph{local} central limit theorem for this polymer model which allows us to refine Theorems~\ref{thmFixedSize} and~\ref{thmFixedSizelam} further still. Given a defect $S$, we define the \emph{type} of $S$  to be the isomorphism class of the graph $Q_d^2[S]$. For a given defect type $T$, we let $X_{T}$ be the random variable that counts the number of defects of type $T$ in a sample from the hard-core model on $Q_d$.  We let $m_{T} = \E_{\lam} X_{T}$.  
 
 Given a collection $\cT$ of types and vector of non-negative integers $ \mathbf k=(k_T )_{T \in \mathcal T}$, let $ i_{m, \mathbf x}(Q_d) $ denote the number of independent sets in $Q_d$ of size $m$ with exactly  $ k_T $ defects of type $T$ for all $T \in \mathcal T$.
 
 \begin{theorem}
\label{thmFixedSizelocal}
Fix $\beta  \in (0,1)$ and  let $\lam=\lam_\beta$ be as in Theorem~\ref{thmFixedSizelam}.  Let $\cT_1$ be the set of defect types  $T$ such that $m_T \to \rho_T$ for some constant $\rho_T >0$ as $d \to \infty$ and  $\cT_2$ the set of defect types $T$ so that $m_T\to\infty$.
Let $ ( k_T )_{T \in \mathcal T_1}$ be a vector of fixed non-negative integers and let $ ( k_T )_{T \in \mathcal T_2}$ be such that $k_T=  \lfloor m_T  +s_T \rfloor$ where $| s_T| = O(\sqrt{ m_T})$ for all $T \in \mathcal T_2$. Let $\mathbf k= (k_T)_{T\in \cT_1\cup\cT_2}$. Then
\begin{align}
 i_{\lfloor \beta N \rfloor, \mathbf k}(Q_d) 
= \frac{ 1 +o(1) }{    \sqrt{2\pi N \beta (1-\beta)} }
  \frac{Z(\lam_{\beta})}{\lam_{\beta}^{\lfloor \beta N \rfloor  } } 
  \prod_{T \in  \cT_1} \frac{\rho_T^{k_T} e^{-\rho_T}}{(k_T)!}
  \prod_{T \in \mathcal T_{2}} \frac{e^{ - \frac{ s_T^2}{2 m_T}}}{\sqrt{2 \pi m_T}}\, 
  \end{align}
 as $d \to \infty$.
  \end{theorem}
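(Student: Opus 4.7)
The plan is to refine Theorem~\ref{thmFixedSizelam} by computing the joint distribution under $\mu_{\lam_\beta}$ of the size $|\mathbf I|$ together with the defect counts $(X_T)_{T\in\cT_1\cup\cT_2}$ via a multivariate local central limit theorem. The analogue of the identity~\eqref{eqIkIdentity},
\[ i_{\lfloor \beta N \rfloor,\mathbf k}(Q_d)=\frac{Z(\lam_\beta)}{\lam_\beta^{\lfloor \beta N \rfloor}}\,\mu_{\lam_\beta}\bigl(|\mathbf I|=\lfloor \beta N \rfloor,\ X_T=k_T\text{ for all }T\bigr), \]
holds by the same one-line argument. Combined with Theorem~\ref{thmFixedSizelam}, the target reduces to showing
\[ \mu_{\lam_\beta}\bigl(|\mathbf I|=\lfloor\beta N\rfloor,\ X_T=k_T\text{ for all }T\bigr)\sim\frac{1}{\sqrt{2\pi N\beta(1-\beta)}}\prod_{T\in\cT_1}\frac{\rho_T^{k_T}e^{-\rho_T}}{k_T!}\prod_{T\in\cT_2}\frac{e^{-s_T^2/(2m_T)}}{\sqrt{2\pi m_T}}. \]

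I would next condition on the defect polymer configuration, working in the polymer model of Section~\ref{secPolymers} and restricting by symmetry to the event that the majority side is even. Given the defect set $S\subset\cO$, the selected even vertices form an independent $\mathrm{Bernoulli}(p)$ subset of $\cE\setminus N(S)$ with $p=\lam_\beta/(1+\lam_\beta)\to\beta$. On the event that distinct defect polymers are sufficiently separated (an event of conditional probability $1-o(1)$ uniformly over the relevant $\mathbf k$, by the cluster-expansion tail bounds of~\cite{jenssen2020independent}), the quantities $|S|$ and $|N(S)|$ depend only on the type counts $\mathbf k$. Hence $|\mathbf I|$ conditioned on $\mathbf k$ is distributed as $|S_{\mathbf k}|$ plus an independent $\mathrm{Bin}(N-|N(S_{\mathbf k})|,p)$, and the binomial local CLT gives
\[ \mu_{\lam_\beta}\bigl(|\mathbf I|=\lfloor\beta N\rfloor\bigm|\mathbf k\bigr)=\frac{1+o(1)}{\sqrt{2\pi N\beta(1-\beta)}}\exp\!\left(-\frac{\Delta_{\mathbf k}^2}{2N\beta(1-\beta)}\right), \]
where $\Delta_{\mathbf k}=\lfloor\beta N\rfloor-|S_{\mathbf k}|-(N-|N(S_{\mathbf k})|)p$. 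The scaling assumptions on $\mathbf k$ together with $|\E_{\lam_\beta}|\mathbf I|-\lfloor\beta N\rfloor|=o(\sqrt N)$ from Theorem~\ref{thmFixedSizelam} make $\Delta_{\mathbf k}=o(\sqrt N)$ uniformly, so this conditional probability simplifies to $(1+o(1))/\sqrt{2\pi N\beta(1-\beta)}$.

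It remains to compute $\mu_{\lam_\beta}(X_T=k_T\text{ for all }T\in\cT_1\cup\cT_2)$, which is the multivariate polymer-model LCLT established in Section~\ref{secLCLT}. For types $T\in\cT_1$ the cluster expansion gives joint factorial-moment convergence $\E\prod_T\binom{X_T}{\ell_T}\to\prod_T\rho_T^{\ell_T}$, implying joint Poisson convergence and hence $\P(X_T=k_T\text{ for all }T\in\cT_1)\to\prod_{T\in\cT_1}\rho_T^{k_T}e^{-\rho_T}/k_T!$. For types $T\in\cT_2$ the multivariate CLT of~\cite{jenssen2020independent} is upgraded to an LCLT by a standard Fourier argument: bound the joint characteristic function away from the origin using cluster-expansion cumulant bounds, and near the origin use the Gaussian approximation. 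Multiplying the two factors and combining with the binomial step completes the proof.

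The main obstacle is the joint LCLT itself, specifically the simultaneous treatment of two fundamentally different scales: the $\cT_1$ directions are Poisson-like variables with bounded mean, while the $\cT_2$ directions are Gaussian with divergent variance. Standard Fourier inversion must be adapted by conditioning on the (finitely many likely) values of $(X_T)_{T\in\cT_1}$ and then applying the Gaussian LCLT to the $\cT_2$ counts; a key point is that conditioning on bounded Poisson values shifts the $\cT_2$ means and variances by only $o(\sqrt{m_T})$, so the Gaussian factors are unchanged to leading order. The cluster expansion supplies the uniform cumulant bounds required to make this scheme work across all defect types simultaneously.
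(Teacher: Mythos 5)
Your proposal follows essentially the same route as the paper: the identity \eqref{eqIkIdentity} extended to track defect counts, a factorization of the joint event into the probability of the prescribed defect counts (handled by a multivariate Poisson/Gaussian local CLT proved via Fourier inversion with cluster-expansion cumulant bounds) times the conditional probability that $|\mathbf I|$ hits $\lfloor\beta N\rfloor$ (handled by conditioning on the polymer configuration and a binomial local CLT). This is exactly the architecture of Section~\ref{secLCLT} and Section~\ref{secSizeStructure}, and your identification of the mixed-scale joint LCLT as the main obstacle, together with the proposed Fourier resolution, matches the paper's Theorem~\ref{thmMultiLCLTdefect}.

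One step is stated too strongly: you assert that, given separation of the polymers, $|S|$ and $|N(S)|$ ``depend only on the type counts $\mathbf k$.'' They do not, because the conditioning fixes the counts only for the types in $\cT_1\cup\cT_2$; the configuration generically contains defects of other types, and their contribution to $\|\mathbf\Gamma\|$ and $|N(\mathbf\Gamma)|$ is random. The paper repairs this by conditioning on the full configuration $\mathbf\Gamma=\Gamma$, restricting to those $\Gamma$ consistent with $\mathbf k$ for which $\|\Gamma\|$ and $|N(\Gamma)|$ are within $o(N^{1/2})$ of their unconditional means --- an event of probability $1-o(1)$ by the central limit theorem for $\|\mathbf\Gamma\|$ and $|N(\mathbf\Gamma)|$ (Lemma~\ref{lemGamCLT}) --- after which the argument of Lemma~\ref{lemImZ} applies verbatim. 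Your claim that $\Delta_{\mathbf k}=o(\sqrt N)$ uniformly is therefore correct in spirit, but it needs this concentration input rather than the determinism you invoke; with that substitution the proof goes through as in the paper.
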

 
This formula matches that of~\eqref{eqIkFormula} with additional factors corresponding to Poisson probabilities (for $T \in \cT_1$) and local central limit theorem probabilities (for $T \in \cT_2$).

\subsection{Methods: maximum entropy, statistical mechanics, and local central limit theorems} 

The methods we use here combine several different probabilistic tools, including abstract polymer models and the cluster expansion, large deviations, and local central limit theorems.  
Counting independent sets in the hypercube is a canonical combinatorial enumeration problem, and so we hope this provides a template for using this combination of tools in  other combinatorial problems. 

There is a long history of using local central limit theorems in combinatorics, with many examples in analytic combinatorics and the study of integer partitions (see e.g.~\cite{romik2005partitions,desalvo2016robust,melczer2020counting,mckinley2020maximum}) as well as the enumeration of contingency tables~\cite{barvinok2012asymptotic} and graphs with prescribed degree sequences (e.g.~\cite{barvinok2013number,isaev2018complex}).  Here we show that local central limit theorems work very well in combination with two tools from statistical physics, polymer models and the cluster expansion, which have been used recently in combinatorial enumeration~\cite{jenssen2020independent,balogh2021independent,jenssen2020homomorphisms}.  

The connection between these methods starts with a general approach to counting via probability and the principle of maximum entropy which is laid out explicitly by Barvinok and Hartigan in~\cite{barvinok2010maximum} (and later discussed in~\cite{mckinley2020maximum}), but appears implicitly in other enumerations methods, such as the circle method (see~\cite{isaev2018complex} for an explanation of these connections).  The main idea is that to count a subset of objects defined by a number of constraints, one  considers the maximum entropy distribution on the larger set that satisfies the constraints in expectation.  The size of the subset can then be expressed as the exponential of the entropy of this distribution times the probability that a random object drawn from this distribution satisfies the constraints.  In the example of enumerating integer partitions, these maximum entropy distributions take the form of sequences of independent geometric random variables with different means~\cite{fristedt1993structure,arratia1994independent}, and asymptotic enumeration can be accomplished by solving a convex optimization problem to find these means and then proving a local central limit theorem for linear combinations of independent geometric random variables~\cite{romik2005partitions,desalvo2016robust,melczer2020counting,mckinley2020maximum}.

This approach naturally leads to considering statistical physics models.  For example, the maximum entropy distribution over independent sets in a graph with a given mean size is the hard-core model.  The entropy of the hard-core model is the log partition function minus the expected size of an independent  set:  $H(\mu_\lam) = \log Z(\lam) - \log \lam \cdot \E_{\mu} | \mathbf I|$, and so the enumeration problem for independent sets of a given size reduces to computing $\log Z$ and computing $\mu_{\lam}(\cI_k)$ (via, say, a local central limit theorem) as described above. 

The complication is that the quantities of interest (say, the size of an independent set from the hard-core model) can no longer be written as the sum of independent random variables.  When interactions are weak enough (or density small enough) correlations between vertices decay exponentially in distance and methods like the cluster expansion can be used to prove both central limit theorems and local central limit theorem. This type of result is closely related to the concept of equivalence of ensembles between the grand canonical ensemble (fixed mean energy) and the canonical ensemble (fixed energy).  For instance, Dobrushin and Tirozzi showed that for spin models with finite-range interactions on $\Z^d$, a central limit theorem implies a local central limit theorem~\cite{dobrushin1977central} (see also~\cite{campanino1979local} for an extension to long-range interactions).

  What we do here is prove local central limit theorems conditioned on a phase in the strong interaction, phase coexistence regime.  This, in combination with using polymer models and the cluster expansion to find the asymptotics of $Z(\lam)$, allows us to enumerate independent sets of a given size and structure.

In Section~\ref{secPolymers}, we recall the even and odd polymer models introduced in~\cite{jenssen2020independent}, and state and extend some of the probabilistic estimates proved there.  

In Section~\ref{secGivenSize} we prove Theorems~\ref{thmFixedSize} and~\ref{thmFixedSizelam}, finding an expansion for a fugacity $\lam_\beta$ so that the expected size of an independent set is sufficiently close to $\lfloor \beta N \rfloor$ that a local central limit theorem will allow us to compute asymptotics. 

In Section~\ref{secLCLT} we show how local central limit theorems for polymer models follow from sufficiently fast convergence of the cluster expansion.  

Finally in Section~\ref{secSizeStructure} we combine  the above results to prove Theorem~\ref{thmFixedSizelocal}.

\section{The even and odd polymer models}
\label{secPolymers}

Let $\cE \subset V(Q_d)$ be the set of even vertices of the hypercube, those whose coordinates sum to an even number, and let $\cO \subset V(Q_d)$ be the odd vertices.   Note that $Q_d$ is a bipartite graph with bipartition $(\cE, \cO)$ and that $|\cE| = | \cO | =N:=2^{d-1}$. 
A key insight of~\cite{jenssen2020independent} is that, for $\lam$ not too small, the hardcore measure $\mu_{Q_d, \lam}$ can be closely approximated by a random perturbation of a random subset of either $\cO$ or $\cE$. 
The random perturbation takes the form of a polymer model with convergent cluster expansion, two notions that we introduce now. 

For a set $S \subseteq \cO$ (and analogously for $S \subseteq \cE$), let $|S|$ denote the number of vertices of $S$, $N(S)$ be the set of neighbors of $S$, and $[S]= \{ v \in \cO : N(v) \subseteq N(S) \}$ the bipartite closure of $S$. 
We call a set $S\subseteq \cO$ an \textit{odd polymer} if (i) the subgraph of $Q_d$ induced by the vertex set $S\cup N(S)$ is connected and (ii) $ |[ S]| \le N/2$. We let $\cP_\cO$ denote the set of all odd polymers. The \emph{weight} of an odd polymer $S$ is
\begin{align}\label{eqweightdef}
w(S)=\frac{\lam^{|S|}}{(1+\lam)^{|N(S)|}}\, .
\end{align}
We say that two odd polymers $S_1, S_2$ are \emph{compatible}, and write $S_1\sim S_2$, if the graph distance between $S_1, S_2$ is $Q_d$ is $>2$.
 We let $\Omega_\cO$ denote the set of all collections of mutually compatible odd polymers and define the following Gibbs measure on $\Omega_\cO$: for $\Gamma\in \Omega_\cO$,
 \[
 \nu_\cO(\Gamma)=\frac{\prod_{S\in \Gamma}w(S)}{\Xi_\cO}\, \quad \text{where} \quad \Xi_{\cO}= \sum_{\Gamma\in \Omega_{\cO}} \prod_{S\in \Gamma}w(S)
 \]
 is the odd polymer model partition function. Using $\nu_\cO$ we define a measure $\mu_{\cO, \lam}$ on independent sets in $Q_d$. 
\begin{defn}\label{defmu}
Let $\mu_{\cO, \lam}$ be the measure on $\cI$ defined by the following two-step process:
 \begin{enumerate}
 \item Choose a polymer configuration $\Gamma \in \Omega_{\cO} $ from $\nu_\cO$ and assign all vertices of $\cup_{S\in \Gamma} S$ to be occupied.\label{itemstep1}
\item For each vertex $v$ in $\cE$ that is not blocked by an occupied vertex in $\cO$, include $v$ in the independent set independently with probability $\frac{\lam}{1+\lam}$.  
\end{enumerate}
Let $Z_{\cO}(\lam) = (1+\lam)^N \Xi_{\cO}$, the independence polynomial of $Q_d$ restricted to independent sets achievable in the odd polymer model; that is, those for which $\mu_{\cO, \lam}$ assigns positive probability.  
 \end{defn}
 
 We think of Step 1 in Definition~\ref{defmu} as a perturbation of the `ground state' measure that simply selects a $p$-random subset of $\cE$ with $p=\lam/(1+\lam)$. 
The polymer configuration chosen in Step 1 will be typically small and so this process typically returns an independent set that is highly unbalanced with the majority of vertices even. We define even polymers, the even polymer model partition function $\Xi_\cE$, and measures $\nu_\cE$,  $\mu_{\cE, \lam}$ analogously. It was shown in~\cite{jenssen2020independent} that for $\lam>C\log d/d^{1/3}$, the hard core measure  $\mu_{Q_d, \lam}$ can be closely approximated by the mixture $\tfrac{1}{2}\mu_{\cO, \lam} + \tfrac{1}{2}\mu_{\cE, \lam}$.

\begin{theorem}[\cite{jenssen2020independent}]\label{thm:TV}
\label{lemPolyApprox}
For $\lam \ge C \log d/d^{1/3}$, we have
\begin{align}
\label{eqZXiapprox}
\left|  \log Z(\lam) - \log  \left[ 2 Z_{\cO} (\lam)\right ]  \right | = O\left( \exp(-N/d^4) \right ) \,.
\end{align}
  Moreover, letting $\hat\mu_{\lam}=\tfrac{1}{2}\mu_{\cO, \lam} + \tfrac{1}{2}\mu_{\cE, \lam}$, we have
 \begin{align*}
\|\mu_{\lam} - \hat\mu_{\lam} \|_{TV}  =O\left(\exp ( -N/d^4)\right) \,.
\end{align*}
Finally, with probability at least $1- O(\exp(-N/d^4))$ any defect vertices of $I$ drawn from $\mu_{\cO, \lam} $ are on the odd side of the bipartition; that is, the defects are the polymers of the polymer configuration. 
\end{theorem}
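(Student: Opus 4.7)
The plan is to first establish the combinatorial identity
\[
Z_\cO(\lam) \;=\; \sum_{I\in\cI\,:\,I\cap\cO\in\mathcal S_\cO}\lam^{|I|},
\]
where $\mathcal S_\cO$ denotes the collection of subsets of $\cO$ all of whose $Q_d^2$-connected components are odd polymers. This follows by expanding $\Xi_\cO=\sum_\Gamma\prod_{S\in\Gamma}w(S)$ and observing that (i) compatible polymers lie at $Q_d$-distance $>2$ and hence have disjoint neighborhoods in $\cE$, (ii) any $S\in\mathcal S_\cO$ is uniquely decomposed into odd polymers by its $Q_d^2$-components, and (iii) summing $\lam^{|T|}$ over $T\subseteq\cE\setminus N(S)$ produces $(1+\lam)^{N-|N(S)|}$, which matches the polymer weight $\lam^{|S|}/(1+\lam)^{|N(S)|}$ against the overall normalization $(1+\lam)^N$. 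The automorphism of $Q_d$ swapping $\cO$ and $\cE$ then gives $Z_\cO=Z_\cE$.

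Next, classifying each $I\in\cI$ by whether $I\cap\cO\in\mathcal S_\cO$ and/or $I\cap\cE\in\mathcal S_\cE$, inclusion-exclusion yields
\[
2Z_\cO-Z \;=\; Z_\cO+Z_\cE-Z \;=\; Z_A-Z_D,
\]
where $Z_A$ (resp.\ $Z_D$) is the total $\lam$-weight of independent sets for which both (resp.\ neither) of the two conditions hold. The principal task is therefore to prove
\[
Z_A+Z_D \;=\; O\bigl(Z\cdot\exp(-N/d^4)\bigr),
\]
which is the main obstacle. Both terms are controlled using Sapozhenko's graph container lemma for the hypercube: the number of $2$-linked $S\subseteq\cO$ with prescribed $|N(S)|$ and $|[S]|$ admits an efficient encoding via small ``approximating'' sets, and the hypercube edge-isoperimetric inequality forces a nontrivial surplus $|N(S)|-|[S]|$ for every $2$-linked $S$ of polynomial size or larger. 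Summing the resulting weight bound over all doubly-good or doubly-bad configurations, and comparing with the trivial lower bound $Z\ge 2(1+\lam)^N$, produces the desired exponential gain; this is the Sapozhenko--Galvin argument as formalized in \cite{jenssen2020independent}.

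Given the partition function estimate \eqref{eqZXiapprox}, the remaining claims follow by direct comparison. On the ``singly good'' event $\mathcal S_\cO\triangle\mathcal S_\cE$, which carries $\mu_\lam$-mass $1-O(\exp(-N/d^4))$, one has $\mu_\lam(I)/\hat\mu_\lam(I)=2Z_\cO/Z=1+O(\exp(-N/d^4))$, while on its complement both measures have exponentially small total mass, yielding $\|\mu_\lam-\hat\mu_\lam\|_{TV}=O(\exp(-N/d^4))$. For the defect-location claim, a sample $I\sim\mu_{\cO,\lam}$ has $|I\cap\cO|$ equal to the polymer-configuration size $|\bigcup_{S\in\Gamma}S|$, which under $\nu_\cO$ concentrates near $\E_{\nu_\cO}|\bigcup_{S\in\Gamma}S|\sim N\lam/(1+\lam)^d$, while the subsequent $p$-random $I\cap\cE$ has mean $\sim N\lam/(1+\lam)$. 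Since $(1+\lam)^{d-1}\to\infty$ rapidly for $\lam\ge C\log d/d^{1/3}$, a Chernoff bound yields $|I\cap\cE|>|I\cap\cO|$ off an event of measure $O(\exp(-N/d^4))$, so the defects (minority side) lie on $\cO$ on this event.
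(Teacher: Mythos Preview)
This theorem is not proved in the present paper; it is quoted from \cite{jenssen2020independent}, so there is no in-paper proof to compare against. That said, your outline is a faithful sketch of the argument in the cited work: the combinatorial identity $Z_\cO(\lam)=\sum_{I:\,I\cap\cO\in\mathcal S_\cO}\lam^{|I|}$, the inclusion--exclusion $2Z_\cO-Z=Z_A-Z_D$, and the container/isoperimetry bound on $Z_A+Z_D$ are exactly the ingredients used there, and your deduction of the total variation bound from \eqref{eqZXiapprox} via the pointwise ratio $\mu_\lam(I)/\hat\mu_\lam(I)=2Z_\cO/Z$ on the singly-achievable event is correct.

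Two small points. First, your notation ``$\mathcal S_\cO\triangle\mathcal S_\cE$'' for the singly-good event is informal (these are collections of subsets of different ground sets); the intended event is $\{I:\,I\cap\cO\in\mathcal S_\cO\}\triangle\{I:\,I\cap\cE\in\mathcal S_\cE\}$. Second, in the defect-location step you invoke ``a Chernoff bound'' for $\|\mathbf\Gamma\|$, but $\|\mathbf\Gamma\|$ is not a sum of independent indicators, so Chernoff does not apply directly. What is actually needed is the large-deviation bound $\mathbb P[\,|N(\mathbf\Gamma)|\ge 2N/d\,]=O(\exp(-N/d^4))$ coming from the cluster-expansion tail estimates (this is \cite[Lemma~16]{jenssen2020independent}, cited in the present paper). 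Since $\|\mathbf\Gamma\|\le|N(\mathbf\Gamma)|$, this controls the odd side; the genuine Chernoff bound then handles the binomial $|I\cap\cE|$, and the comparison $|I\cap\cE|>|I\cap\cO|$ follows as you describe.
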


The lower bound on $\lam$ in Theorem~\ref{thm:TV} is an artifact of Sapozhenko's graph container method as implemented by Galvin~\cite{galvin2011threshold}. Theorem~\ref{thm:TV} quite possibly remains true for $\lam=\tilde\Omega(1/d)$ though proving this would require significant new ideas.

The power of Theorem~\ref{thm:TV} stems from the fact that the even and odd polymer models admit  convergent cluster expansions allowing for a detailed understanding of the measures $\mu_{\cO, \lam}, \mu_{\cE, \lam}$. Let us now introduce the cluster expansion formally.   

For a tuple $\Gamma$ of odd polymers, the \textit{incompatibility graph}, $H(\Gamma)$, is the graph with vertex set $\Gamma$ and an edge between any two incompatible polymers.  An odd \textit{cluster} $\Gamma$ is an ordered tuple of even polymers so that $H(\Gamma)$ is connected.  The size of a cluster $\Gamma$ is $\|\Gamma \| = \sum_{S \in \Gamma} |S|$.   Let $\cC$ be the set of all odd clusters.
For a cluster $\Gamma$ we define 
\begin{align*}
w(\Gamma)&= \phi (H(\Gamma)) \prod_{S \in \Gamma} w(S) \,,
\end{align*}
where $\phi(H)$ is the \textit{Ursell function} of a graph $H$, defined by
\begin{align}
\label{eqUrsell}
\phi(H) &= \frac{1}{|V(H)|!} \sum_{\substack{A \subseteq E(H)\\ \text{spanning, connected}}}  (-1)^{|A|} \, .
\end{align}
The cluster expansion is the formal infinite series
\begin{align}\label{eq:clusterexpansion}
\log \Xi_\cO= \sum_{\Gamma\in \cC} w(\Gamma)\, .
\end{align}
We define the cluster expansion of $\log \Xi_\cE$ analogously and note that by symmetry the expansions are identical.

In light of Theorem~\ref{lemPolyApprox}, throughout this section will we assume that $ \lam \ge C \log d/d^{1/3}$. We will also assume that $\lam=O(1)$ as $d\to\infty$.
 The following result from~\cite{jenssen2020independent} shows that for such $\lam$ the cluster expansion converges, we have good tail bounds on the expansion, and the terms of the cluster expansion can be efficiently computed. We say that a polynomial is \emph{computable in time $t$}, if its coefficients can be computed in time $t$. 

\begin{theorem}\label{thm:clusterexp}
For fixed $k \ge 1$,
\begin{align*}
 \sum_{\substack{\Gamma \in \mathcal{C} \\ \|\Gamma\| \geq k}}|w(\Gamma)|=  O \left ( \frac{2^d d^{2(k-1)}}{(1+ \lam)^{dk} }\right )
\end{align*}
and 
\begin{align}\label{eq:Rdef}
 \sum_{\substack{\Gamma \in \mathcal{C} \\ \|\Gamma\|= k}} w(\Gamma) = N\cdot  R_k(\lam ,d) (1+\lam)^{-kd}
\end{align}
where $R_k$ is a polynomial in $d$ and $\lam$ of degree at most $2k$ in $d$ and of degree at most $3k^2$ in $\lam$. Moreover $R_k$ is computable in time $e^{O(k \log k)}$. In particular,
\begin{align*}
\log \Xi_\cO 
&= N \sum_{j=1}^k  R_j(\lam ,d) (1+\lam)^{-jd}    +  O \left ( \frac{2^d d^{2k}}{(1+ \lam)^{d(k+1)} }\right )  \, .
\end{align*}

\end{theorem}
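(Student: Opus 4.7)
The plan is to combine the standard Kotecký–Preiss machinery for convergent cluster expansions with bookkeeping that exploits the high symmetry of $Q_d$. For convergence, I would verify a Kotecký–Preiss criterion using two estimates: (i) the lower bound $|N(S)| \geq d|S| - 2\binom{|S|}{2}$ for any odd polymer $S$, since two distinct vertices of $\cO$ share at most two common neighbors in $Q_d$, which yields $|w(S)| \leq \lam^{|S|}(1+\lam)^{-d|S|+O(|S|^2)}$; and (ii) the fact that odd polymers of size $s$ containing a fixed vertex induce connected subgraphs of the graph on $\cO$ whose edges record distance $2$ in $Q_d$, and this graph has maximum degree $\binom{d}{2}$, so their number is at most $(ed^2/2)^s$. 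Combined with the hypothesis $\lam \geq C\log d/d^{1/3}$, the key quantity $d^2 \cdot \lam/(1+\lam)^d$ is vanishingly small, so the Kotecký–Preiss condition holds with room to spare and~\eqref{eq:clusterexpansion} converges absolutely.

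The tail bound is then a standard consequence. For each size $s$ and each distinguished odd vertex $v$,
\begin{equation*}
\sum_{\substack{\Gamma\in\cC\\ v\in V(\Gamma),\ \|\Gamma\|=s}}|w(\Gamma)| \leq (Cd^2)^{s-1}\cdot \frac{\lam^s}{(1+\lam)^{ds - O(s^2)}} = O\!\left(\frac{d^{2(s-1)}}{(1+\lam)^{ds}}\right),
\end{equation*}
where the $(Cd^2)^{s-1}$ factor enumerates incidence skeletons built step-by-step starting at $v$ (at each step an additional polymer vertex must lie at distance $\leq 2$ in $Q_d$ from a previous one, giving $O(d^2)$ choices). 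Summing over $v \in \cO$ introduces a factor $N = 2^{d-1}$, and the ratio between consecutive terms is $O(d^2/(1+\lam)^d) = o(1)$. Hence $\sum_{\|\Gamma\| \geq k}|w(\Gamma)|$ is a geometric series dominated by the $s=k$ term, yielding $O(2^d d^{2(k-1)}(1+\lam)^{-dk})$ as claimed.

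To extract the polynomial form of $R_k$, I group clusters of size exactly $k$ by isomorphism type $\tau$, where $\tau$ records the bipartite pattern $Q_d[S\cup N(S)]$ of each constituent polymer together with the coordinate overlap structure between polymers. The weight $w(\Gamma)$ depends only on $\tau$ and $d$, and for each polymer $S\in\Gamma$ the quantity $|N(S)|$ is a linear polynomial in $d$ determined by $\tau$. By the vertex-transitivity of $Q_d$ and the observation that an embedding of $\tau$ into $Q_d$ amounts to choosing a root vertex and then unordered pairs of coordinates for the remaining structural edges, the number of embeddings is $N \cdot p_\tau(d)$ where $p_\tau$ is a polynomial of degree $\leq 2(k-1)$ in $d$. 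Multiplying by the Ursell factor $\phi(H(\Gamma))$, which depends only on $\tau$, and summing over the finitely many types of total size $k$ yields the representation $N\cdot R_k(\lam,d)(1+\lam)^{-kd}$, with degree at most $2k$ in $d$ and at most $3k^2$ in $\lam$ (the latter from expanding the $(1+\lam)^{O(k^2)}$ correction coming from $|N(S)|$ deviating from $d|S|$).

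The main technical obstacle is the algorithmic claim: enumerating the isomorphism types $\tau$ of size-$k$ clusters and computing each $p_\tau(d)$ and Ursell weight $\phi(H(\Gamma))$ in aggregate time $e^{O(k\log k)}$. The number of types is bounded by the number of connected structures on $k$ labelled vertices, which is $e^{O(k\log k)}$. For each type, $p_\tau$ is a fixed combinatorial polynomial (independent of $d$) computable by inclusion–exclusion in time polynomial in $k$, and $\phi(H(\Gamma))$ is computable directly from~\eqref{eqUrsell} by a deletion-contraction style recursion on the incompatibility graph; symmetrization by the automorphism group of $\tau$ must be handled carefully to avoid over-counting. Truncating~\eqref{eq:clusterexpansion} at size $k$ and absorbing the remainder via the tail bound established in the second paragraph then yields the claimed asymptotic formula for $\log \Xi_\cO$.
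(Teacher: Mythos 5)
First, a framing point: this paper does not actually prove Theorem~\ref{thm:clusterexp}; it imports it wholesale from~\cite{jenssen2020independent}. So your reconstruction is being measured against the proof given there.

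The genuine gap is your treatment of convergence, i.e.\ of large polymers. Your verification of the Kotecký--Preiss condition rests on the weight bound $|w(S)|\le \lam^{|S|}(1+\lam)^{-d|S|+O(|S|^2)}$, which comes from $|N(S)|\ge d|S|-2\binom{|S|}{2}=|S|(d-|S|+1)$. That inequality is vacuous once $|S|\ge d$, whereas polymers in this model are arbitrary $2$-linked sets subject only to $|[S]|\le N/2$, so they can have size exponential in $d$. In the regime $|S|=s\gg d$ the best available expansion lower bound on $|N(S)|$ is only of order $s(1+\Omega(d^{-1/2}))$, and pairing that with your enumeration $(ed^2/2)^s$ of polymers of size $s$ through a fixed vertex gives roughly $(ed^2/2)^s(\lam/(1+\lam))^s(1+\lam)^{-\Omega(s/\sqrt d)}$, which diverges for constant $\lam$. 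Handling this regime is precisely the hard part of the proof in~\cite{jenssen2020independent}: one needs Sapozhenko's graph container lemma (as implemented by Galvin) to show that the number of $2$-linked sets with $|[S]|=a$ and $|N(S)|=b$ is much smaller than $(1+\lam)^b\lam^{-a}$, and it is this step --- not the observation that $d^2\lam(1+\lam)^{-d}=o(1)$ --- that forces the hypothesis $\lam\ge C\log d/d^{1/3}$ (the paper says exactly this after Theorem~\ref{thm:TV}). Without the container input, your convergence claim, and hence the tail bound and the legitimacy of truncating the expansion, is unsupported. Your own quoted value of $\gamma(d,k)$ in Lemma~\ref{lem:strongtail}, with its three regimes $k\le d/10$, $d/10<k\le d^4$, $k>d^4$, is a visible trace of this three-tier analysis.

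The small-cluster part of your argument (the factor $N$ from vertex-transitivity, $p_\tau(d)$ of degree $\le 2(k-1)$, $|N(S)|=d|S|-c_\tau$ giving the $\lam$-degree bound $3k^2$) is essentially the right picture, with two repairable slips: the number of isomorphism types is not controlled by ``connected structures on $k$ labelled vertices'' (that count is $2^{\Theta(k^2)}$); one gets $e^{O(k\log k)}$ by building the support from a spanning tree of the distance-$2$ skeleton with coordinate labels ($k^{O(k)}$ choices) and then choosing the ordered tuple of sub-polymers as subsets of a $\le k$-element support with sizes summing to $k$ ($\le k^k$ choices). Likewise $\phi(H)$ must be computed by the $2^{O(k)}$ inclusion--exclusion over vertex subsets rather than the naive sum over the $2^{O(k^2)}$ connected spanning subgraphs in~\eqref{eqUrsell}.
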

We note that Theorem~\ref{thmJPZ} follows in \cite{jenssen2020independent} from Theorems~\ref{lemPolyApprox} and~\ref{thm:clusterexp}. 

It will also be useful to record the following slightly strengthened tail bound on the cluster expansion. The following lemma is essentially contained in \cite{jenssen2020independent}, though it does not appear explicitly and so we provide the details. 

\begin{lemma}\label{lem:strongtail}
For $t, \ell\geq 1$ fixed,
\[
 \sum_{\substack{\Gamma \in \mathcal{C} \\ \|\Gamma\| \geq t}}|w(\Gamma)| \|\Gamma\|^\ell =  O \left ( \frac{2^d d^{2(t-1)}}{(1+ \lam)^{dt} }\right )\, .
\]
\end{lemma}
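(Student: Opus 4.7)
The plan is to deduce the lemma directly from Theorem~\ref{thm:clusterexp} by decomposing the sum according to cluster size and exploiting the rapid geometric decay of the size-$k$ contributions.

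First, I would extract from Theorem~\ref{thm:clusterexp} the per-size bound
\[
\sum_{\substack{\Gamma \in \cC \\ \|\Gamma\|=k}} |w(\Gamma)| = O\!\left( \frac{2^d d^{2(k-1)}}{(1+\lam)^{dk}}\right),
\]
which is immediate from the tail estimate $\sum_{\|\Gamma\|\geq k}|w(\Gamma)| = O(2^d d^{2(k-1)}/(1+\lam)^{dk})$ by taking the size-$k$ term as a subset of the tail. Then I would split
\[
\sum_{\substack{\Gamma \in \cC \\ \|\Gamma\| \geq t}}|w(\Gamma)|\,\|\Gamma\|^\ell
= \sum_{k \geq t} k^\ell \sum_{\substack{\Gamma \in \cC \\ \|\Gamma\|=k}} |w(\Gamma)|
\leq C \sum_{k \geq t} k^\ell \cdot \frac{2^d d^{2(k-1)}}{(1+\lam)^{dk}}
\]
for an absolute constant $C$.

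The key observation is that consecutive terms in the series on the right have ratio
\[
\frac{(k+1)^\ell}{k^\ell}\cdot \frac{d^2}{(1+\lam)^d}.
\]
Under the standing assumption $\lam \geq C\log d/d^{1/3}$, we have $(1+\lam)^d \geq \exp\!\bigl(\tfrac{1}{2}C d^{2/3}\log d\bigr)$, which dominates any polynomial in $d$. Hence for each fixed $\ell$ and all $d$ large, this ratio is at most $1/2$, say. The series is therefore geometric-like and dominated by its leading term $k=t$, giving
\[
\sum_{k \geq t} k^\ell \cdot \frac{2^d d^{2(k-1)}}{(1+\lam)^{dk}} = O\!\left( t^\ell\cdot \frac{2^d d^{2(t-1)}}{(1+\lam)^{dt}}\right) = O\!\left( \frac{2^d d^{2(t-1)}}{(1+\lam)^{dt}}\right),
\]
since $t, \ell$ are fixed constants. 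Combining the two displays yields the claimed bound.

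The only step requiring genuine care is extracting the per-size bound from Theorem~\ref{thm:clusterexp}; strictly speaking the theorem as stated only bounds the tail sum, but the same argument behind it (counting clusters of a fixed total size via container/subadditivity bounds on the number of connected subgraphs of $Q_d^2$, as in \cite{jenssen2020independent}) delivers the per-size estimate. Once that is in hand, the rest is a routine geometric summation, so I do not anticipate any serious obstacle.
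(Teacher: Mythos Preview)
Your geometric-sum approach is natural, but there is a real gap in the step where you pass from Theorem~\ref{thm:clusterexp} to the inequality
\[
\sum_{\substack{\Gamma\in\cC\\ \|\Gamma\|=k}} |w(\Gamma)| \le C\,\frac{2^d d^{2(k-1)}}{(1+\lam)^{dk}}
\]
with a \emph{single} constant $C$ independent of $k$. Theorem~\ref{thm:clusterexp} is stated for each \emph{fixed} $k$, so both the implicit constant and the threshold on $d$ may depend on $k$; as a black box it does not give a uniform $C$ valid simultaneously for all $k\ge t$. Your closing remark misidentifies the difficulty: extracting a per-size bound from a tail bound is trivial (the size-$k$ terms are a subset of the tail), and this is not where care is needed. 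What you actually need, and have not justified, is uniformity of the constant in $k$, without which the geometric summation over $k\ge t$ is not legitimate.

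The paper handles exactly this point by invoking a stronger input from~\cite{jenssen2020independent} (Lemma~15 there): the exponentially weighted bound $\sum_{\Gamma\in\cC}|w(\Gamma)|\,e^{\gamma(d,\|\Gamma\|)}\le 2^{d-1}d^{-3/2}$, which is uniform over all cluster sizes. Using $e^{\gamma(d,k)/2}\ge k^\ell$, this immediately yields a tail bound on $\sum_{\|\Gamma\|\ge k}|w(\Gamma)|\,\|\Gamma\|^\ell$ of order $2^d d^{-3/2}e^{-\gamma(d,k)/2}$. The paper then splits the range into $t\le\|\Gamma\|<3t$ (finitely many fixed sizes, so Theorem~\ref{thm:clusterexp} applies and $\|\Gamma\|^\ell\le(3t)^\ell$ is harmless) and $\|\Gamma\|\ge 3t$ (controlled by the exponential-weight bound). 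Your approach could be repaired by using the same Lemma~15 input to get a uniform per-size bound, after which the geometric summation does go through; but as written the appeal to Theorem~\ref{thm:clusterexp} alone does not suffice.
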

\begin{proof}
In \cite{jenssen2020independent} (see Lemma 15) it is shown that
\begin{align*}
\sum_{\Gamma \in \cC} |w(\Gamma)| e^{\gamma(d,\|\Gamma\|)} & \le 2^{d-1} d^{-3/2} \,,
\end{align*}
where
\begin{align*}
\gamma(d,k) &= \begin{cases}
\log(1+\lam)  (dk - 3 k^2)- 7 k\log d  \text{ if }  k \le \frac{d}{10}  \\
 \frac{d \log(1+\lam) k}{20} \text{ if }  \frac{d}{10} < k \le d^4\\
\frac{k}{d^{3/2}} \text{ if }  k > d^4 \,.
\end{cases} 
\end{align*}
Since $e^{\gamma(d,k)/2}\geq k^\ell$ for all $k$ and $d$ sufficiently large, it follows that

\begin{align*}
\sum_{\Gamma \in \cC} |w(\Gamma)|\|\Gamma\|^\ell e^{\gamma(d,\|\Gamma\|)/2} & =O( 2^{d} d^{-3/2}) \, .
\end{align*}
Keeping only terms in the above inequality corresponding to clusters of size at least $k$ we have
\begin{align}\label{eqgamtail}
 \sum_{\substack{\Gamma \in \mathcal{C} \\ \|\Gamma\| \geq k}} |w(\Gamma)|\|\Gamma\|^\ell  & \le O(2^{d} d^{-3/2}e^{-\gamma(d,k)/2}) \,.
\end{align}
With $t\geq0$ fixed we have by Theorem~\ref{thm:clusterexp} and~\eqref{eqgamtail} that 
\begin{align*}
\sum_{\substack{\Gamma \in \mathcal{C} \\ \|\Gamma\| \geq t}} |w(\Gamma)|\|\Gamma\|^\ell &=
 \sum_{\substack{\Gamma \in \mathcal{C} \\ t\leq\|\Gamma\| < 3t}} |w(\Gamma)|\|\Gamma\|^\ell 
 +
 \sum_{\substack{\Gamma \in \mathcal{C} \\ \|\Gamma\| \geq 3t}} |w(\Gamma)|\|\Gamma\|^\ell \\
 &= O \left ( \frac{2^d  d^{2(t-1)}}{(1+ \lam)^{dt} }\right ) + O\left(\frac{2^{d}d^{11t}}{(1+\lam)^{3dt/2}}\right)\\
 &= O \left ( \frac{2^d  d^{2(t-1)}}{(1+ \lam)^{dt} }\right ) \, .
\end{align*}
\end{proof}

Let $\mathbf \Gamma$ be a collection of compatible polymers sampled according to $\nu_{\cO}$ (the polymer measure at Step 1 of Definition~\ref{defmu}, the definition of $\mu_{\cO, \lam}$). We will use the above lemma to show that $\|\mathbf \Gamma\|$ and $|N(\mathbf \Gamma)|$ obey a central limit theorem. Formally, we say a sequence of random variables $(X_d)$ obeys a central limit theorem if $(X_d - \E X_d)/\sqrt{\var(X_d)}$ converges to $N(0,1)$ in distribution as $d\to\infty$. To prove this central limit theorem we will make use of a connection between the cluster expansion and \emph{cumulant generating functions}.

 Recall the cumulant generating function of a random variable $X$, is $h_t(X) = \log \E e^{tX}$. 
The $\ell$th \textit{cumulant} of $X$ is defined by taking derivatives of $h_t(X)$ and evaluating at $0$:
\begin{align*}
\kappa_\ell(X) &= \frac{ \partial^\ell h_t(X)}{\partial t^\ell} \Bigg|_{t=0} \,.  
\end{align*}
In particular, $\kappa_1(X)=\E (X)$ and $\kappa_2(X)=\var(X)$. 
The cumulants of $|N(\mathbf\Gamma)|$ can be expressed in terms of the cluster expansion as follows. 
Consider the odd polymer model with modified weights $w_t(S)=w(S)e^{t|N(S)|}$ for $t>0$ and let $\Xi_t$ denote the corresponding partition function. We then have 
\begin{align*}
h_t(|N(\mathbf \Gamma)|)= \log \Xi_t - \log \Xi_{\cO}\, .
\end{align*}
Applying the cluster expansion to  $\log \Xi_t$, taking derivatives, and evaluating at $t=0$ shows that
\begin{align}\label{eqclustercumulant}
\kappa_\ell(|N(\mathbf \Gamma)|)= \sum_{\Gamma\in\cC}w(\Gamma)|N( \Gamma)|^{\ell}\, .
\end{align}
Similarly $\kappa_\ell(\|\mathbf \Gamma\|)= \sum_{\Gamma\in\cC}w(\Gamma)\|\Gamma\|^{\ell}$.

\begin{lemma}\label{lemGamCLT}
Let $\mathbf \Gamma$ be a collection of compatible polymers sampled according to $\nu_{\cO}$. Then $\|\mathbf \Gamma\|$ and $|N(\mathbf \Gamma)|$ both obey a central limit theorem.
\end{lemma}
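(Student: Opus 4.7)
The plan is to apply the method of cumulants: a random variable $X_d$ with positive variance converges, after centering and scaling by its standard deviation, to $N(0,1)$ in distribution provided all standardized cumulants $\kappa_\ell(X_d)/\kappa_2(X_d)^{\ell/2}$ tend to $0$ for every fixed $\ell \geq 3$. The cluster-expansion identity for cumulants derived just before the lemma, $\kappa_\ell(|N(\mathbf\Gamma)|) = \sum_{\Gamma \in \cC} w(\Gamma)|N(\Gamma)|^\ell$, together with its counterpart $\kappa_\ell(\|\mathbf\Gamma\|) = \sum_{\Gamma \in \cC} w(\Gamma)\|\Gamma\|^\ell$, reduces the CLT to a careful estimation of two weighted cluster sums.

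The first step is to isolate the contribution of size-$1$ clusters. Such a cluster is a single singleton polymer $\{v\}$ with $v\in\cO$, carrying weight $\lam/(1+\lam)^d$, $|N(\{v\})|=d$, and $\|\{v\}\|=1$; summing over the $N=2^{d-1}$ odd vertices gives leading terms $Nd^\ell\lam/(1+\lam)^d$ and $N\lam/(1+\lam)^d$ respectively. The second step is to bound the contribution from clusters with $\|\Gamma\|\geq 2$ by applying Lemma~\ref{lem:strongtail} with $t=2$ together with the trivial bound $|N(\Gamma)|\leq d\|\Gamma\|$, producing $O(2^d d^{\ell+2}/(1+\lam)^{2d})$ and $O(2^d d^2/(1+\lam)^{2d})$. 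The ratio of this tail to the leading term is $O(d^2/(\lam(1+\lam)^d))$, which is $o(1)$ under the standing hypothesis $\lam\geq C\log d/d^{1/3}$, so
\begin{align*}
\kappa_\ell(|N(\mathbf\Gamma)|) \;=\; (1+o(1))\,\frac{N\lam d^\ell}{(1+\lam)^d}, \qquad \kappa_\ell(\|\mathbf\Gamma\|) \;=\; (1+o(1))\,\frac{N\lam}{(1+\lam)^d}.
\end{align*}

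Combining with the $\ell=2$ case, the standardized higher cumulants satisfy
\begin{align*}
\frac{\kappa_\ell(X_d)}{\kappa_2(X_d)^{\ell/2}} \;=\; (1+o(1))\Bigl(\frac{N\lam}{(1+\lam)^d}\Bigr)^{1-\ell/2}
\end{align*}
for both $X_d=|N(\mathbf\Gamma)|$ and $X_d=\|\mathbf\Gamma\|$. Since $N\lam/(1+\lam)^d\to\infty$ in the range of $\lam$ under consideration, the negative exponent $1-\ell/2$ forces this ratio to $0$ for every $\ell\geq 3$, yielding the CLT for both variables. The main subtlety is that the cluster expansion is a \emph{signed} sum, because the Ursell weights $\phi(H(\Gamma))$ can take either sign, so a priori cancellations could spoil either the identification of the leading term or the tail estimate; the unsigned tail bound provided by Lemma~\ref{lem:strongtail} is precisely what rules this out and is the essential technical input.
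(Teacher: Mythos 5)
Your proof is correct and follows essentially the same route as the paper's: both reduce the CLT to showing $\kappa_\ell(Z)\to 0$ for fixed $\ell\geq 3$, both use the cluster-expansion representation of cumulants together with the bound $|N(\Gamma)|\leq d\|\Gamma\|$, and both invoke Lemma~\ref{lem:strongtail} to control the signed tail of the expansion. The only cosmetic difference is that you extract the exact leading-order asymptotics of $\kappa_\ell$ from the size-$1$ clusters, whereas the paper combines a coarser upper bound on $\kappa_\ell$ with a matching lower bound on the variance.
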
 
\begin{proof}
We show that $|N(\mathbf \Gamma)|$ obeys a central limit theorem and the proof for $\|\mathbf \Gamma\|$ is identical. Let $Z=(|N(\mathbf \Gamma)| - \E |N(\mathbf \Gamma)|)/\sqrt{\var(|N(\mathbf \Gamma)|)}$. To show that $Z$ converges to $N(0,1)$ in distribution, it suffices to show that the cumulants of $Z$ converge to the cumulants of a standard normal i.e.\ it suffices to show that $\kappa_\ell(Z)\to 0$ for $\ell \geq 3$.
Now by~\eqref{eqclustercumulant} and Lemma~\ref{lem:strongtail},
\begin{align*}
\kappa_\ell(|N(\mathbf \Gamma)|)= \sum_{\Gamma\in \cC}w(\Gamma)|N(\Gamma)|^{\ell}\leq d^\ell \sum_{\Gamma\in \cC}w(\Gamma)\|\Gamma\|^{\ell}= O \left ( \frac{2^d d^\ell }{(1+ \lam)^{d} }\right )\, .
\end{align*}
On the other hand, by~\eqref{eqclustercumulant} and Lemma~\ref{lem:strongtail} again we have
\begin{align*}
\var(|N(\mathbf \Gamma)|)= \sum_{\Gamma\in \cC}w(\Gamma)|N(\Gamma)|^{2}\geq \sum_{\Gamma\in \cC}w(\Gamma)\|\Gamma\|^2= 2^{d-1}\frac{\lam}{(1+\lam)^d}+ O \left ( \frac{2^d  d^2}{(1+ \lam)^{2d} }\right )\, .
\end{align*}
It follows that if $\ell\geq 3$ then
\[
\kappa_\ell(Z)= \var(|N(\mathbf \Gamma)|)^{-\ell/2} \kappa_\ell(|N(\mathbf \Gamma)|)\to 0
\]
as desired.
\end{proof}

Next we will use the cluster expansion to give bounds on $\E_{\cO,\lam} ( | \mathbf I |)$, the expected size of an independent set sampled according to $\mu_{\cO, \lam}$. 
We begin by recording a useful expression for $\E_{\cO,\lam} ( | \mathbf I |)$ in terms of the cluster expansion.

\begin{lemma}\label{lem:expexpressions}
\begin{align*}
\E_{\cO,\lam} ( | \mathbf I |) 
&= \frac{\lam}{1+ \lam} N + \sum_{\Gamma\in\cC} w(\Gamma)\left(\|\Gamma\|- \frac{\lam}{1+\lam}|N(\Gamma)|\right).
\end{align*}
\end{lemma}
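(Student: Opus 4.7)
The plan is to decompose $|\mathbf I|$ according to the two-step sampling procedure that defines $\mu_{\cO,\lam}$ and then use the cluster expansion identity for cumulants to rewrite the two resulting expectations.

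First, I would write $|\mathbf I| = \|\mathbf \Gamma\| + Y$, where $\mathbf \Gamma \sim \nu_\cO$ is the polymer configuration selected in Step~\eqref{itemstep1} and $Y$ is the number of even vertices added in Step~2. Conditional on $\mathbf \Gamma$, exactly $N - |N(\mathbf \Gamma)|$ even vertices are unblocked and each is included independently with probability $\lam/(1+\lam)$, so $\E[Y \mid \mathbf \Gamma] = \tfrac{\lam}{1+\lam}(N - |N(\mathbf \Gamma)|)$. Taking expectations,
\begin{align*}
\E_{\cO,\lam}(|\mathbf I|) &= \E\|\mathbf \Gamma\| + \frac{\lam}{1+\lam} N - \frac{\lam}{1+\lam} \E|N(\mathbf \Gamma)|.
\end{align*}

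Next, I would convert each of $\E\|\mathbf \Gamma\|$ and $\E|N(\mathbf \Gamma)|$ into cluster sums. These are the first cumulants of the respective statistics under $\nu_\cO$, and the identity $\kappa_\ell(|N(\mathbf \Gamma)|) = \sum_{\Gamma \in \cC} w(\Gamma)|N(\Gamma)|^\ell$ (and its analog for $\|\mathbf\Gamma\|$) was already derived in the paper just before Lemma~\ref{lemGamCLT} via the tilted partition function trick with weights $w_t(S) = w(S)e^{t|N(S)|}$ (and $w_t(S) = w(S) e^{t|S|}$ respectively). Applying this with $\ell = 1$ gives
\begin{align*}
\E\|\mathbf \Gamma\| &= \sum_{\Gamma \in \cC} w(\Gamma)\|\Gamma\|, \qquad \E|N(\mathbf \Gamma)| = \sum_{\Gamma \in \cC} w(\Gamma)|N(\Gamma)|.
\end{align*}
Substituting these into the previous display and combining the two sums into one gives exactly the claimed identity.

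The only delicate point is justifying the interchange of expectation and infinite cluster sum when reading the first cumulant off of the cluster expansion; this is however a consequence of the absolute convergence bound in Theorem~\ref{thm:clusterexp} (and the strengthened version Lemma~\ref{lem:strongtail}), which is already available in the range of $\lam$ we are working in. I do not expect a real obstacle: the step is essentially bookkeeping, with the conditional expectation calculation for Step~2 being the only content beyond invoking the cumulant-cluster identity.
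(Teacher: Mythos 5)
Your proof is correct, but it takes a different route from the paper's. The paper computes $\E_{\cO,\lam}|\mathbf I|$ as a logarithmic derivative in the fugacity: since $\mu_{\cO,\lam}(I)=\lam^{|I|}/\bigl((1+\lam)^N\Xi_\cO\bigr)$, one has $\E_{\cO,\lam}|\mathbf I| = \lam\frac{d}{d\lam}\log\bigl((1+\lam)^N\Xi_\cO\bigr) = \frac{\lam}{1+\lam}N+\lam(\log\Xi_\cO)'$, and then differentiates the cluster expansion term by term using the explicit form $w(\Gamma)=\phi(H(\Gamma))\lam^{\|\Gamma\|}(1+\lam)^{-|N(\Gamma)|}$. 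You instead condition on the outcome of Step~1 of Definition~\ref{defmu}, writing $|\mathbf I|=\|\mathbf\Gamma\|+Y$ with $\E[Y\mid\mathbf\Gamma]=\frac{\lam}{1+\lam}(N-|N(\mathbf\Gamma)|)$, and then invoke the $\ell=1$ case of the cumulant--cluster identity~\eqref{eqclustercumulant} for $\E\|\mathbf\Gamma\|$ and $\E|N(\mathbf\Gamma)|$. Both arguments ultimately rest on term-by-term differentiation of an absolutely convergent cluster expansion (yours in the tilting parameter $t$, the paper's in $\lam$), so the analytic content is the same; the interchange is justified by Theorem~\ref{thm:clusterexp} and Lemma~\ref{lem:strongtail} exactly as you say. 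Your version has the advantage of making the probabilistic meaning of the two terms transparent and of reusing~\eqref{eqclustercumulant} rather than redoing an algebraic computation --- indeed the paper itself effectively inverts your decomposition later, in the proof of Lemma~\ref{lemImZ}, where it recombines Lemma~\ref{lem:expexpressions} with~\eqref{eqclustercumulant} to recover $\E\|\mathbf\Gamma\|+\frac{\lam}{1+\lam}(N-\E|N(\mathbf\Gamma)|)$. The paper's version is marginally more self-contained since it never needs the two-step sampling description, only the explicit weight of an independent set under $\mu_{\cO,\lam}$.
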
 
\begin{proof}
Note that for an independent set $I$ such that $\mu_{\cO,\lam}(I)>0$, we have
\[
\mu_{\cO,\lam}(I)= \frac{\lam^{|I|}}{Z_\cO(\lam)}   = \frac{\lam^{|I|}}{(1+\lam)^N \Xi_\cO}\, .
\]
It follows that 
\[
\E_{\cO,\lam} ( | \mathbf I |)=\sum_I \frac{|I|\lam^{|I|}}{(1+\lam)^N \Xi_\cO}=\lam \frac{d}{d\lam} \log\left((1+\lam)^N \Xi_\cO \right)= \frac{\lam}{1+\lam}N + \lam(\log \Xi_\cO)'\, .
\]
We expand $\log \Xi_\cO$ via the cluster expansion as in~\eqref{eq:clusterexpansion} (which converges absolutely by Theorem~\ref{thm:clusterexp}). Recalling that 
$w(\Gamma)= \phi(H(\Gamma))\lam^{\|\Gamma\|}(1+\lam)^{-|N(\Gamma)|}$ 
for a cluster $\Gamma$,  we have
\begin{align*}
\E_{\cO,\lam} ( | \mathbf I |) 
&=\frac{\lam}{1+ \lam} N + \sum_{\Gamma\in\cC} \phi(H(\Gamma))  \frac{  \lam^{|\Gamma|}   ( (1+\lam)\|\Gamma\|  - \lam |N(\Gamma)|   ) }{  (1+\lam)^{|N(\Gamma)|+1}   } \\
&= \frac{\lam}{1+ \lam} N + \sum_{\Gamma\in\cC} w(\Gamma)\left(\|\Gamma\|- \frac{\lam}{1+\lam}|N(\Gamma)|\right)\, .
\end{align*}
\end{proof}

\begin{cor}\label{cor:expectation}
For fixed $k\geq 0$,
\begin{align*}
\E_{\cO,\lam} ( | \mathbf I |) &= \frac{\lam}{1+ \lam} N +  \lam N \sum_{j=1}^k \frac{ \frac{\partial}{\partial \lam}R_j(\lam,d)  }{ (1+\lam)^{jd}  }  - \frac{  jd R_j(\lam,d) }{ (1+\lam)^{jd+1}  }      +  O\left( \frac{2^d d^{2k+1}}{ (1+\lam)^{d(k+1)}} \right)\, .
\end{align*}
Where the $R_j(\lam, d)$ are as in Theorem~\ref{thm:clusterexp}.
\end{cor}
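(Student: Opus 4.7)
The plan is to start from the expression in Lemma~\ref{lem:expexpressions} and split the cluster sum at $\|\Gamma\|=k$. The tail is a direct application of the strengthened bound in Lemma~\ref{lem:strongtail}, while the main part comes from a clean identification of the summand with $\lam\,\partial_\lam$ acting on the polymer weight.

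First I would observe the algebraic identity
\begin{align*}
\lam \frac{\partial}{\partial \lam} w(\Gamma) = w(\Gamma)\left( \|\Gamma\| - \frac{\lam}{1+\lam} |N(\Gamma)| \right),
\end{align*}
which follows by differentiating $w(\Gamma)=\phi(H(\Gamma))\lam^{\|\Gamma\|}(1+\lam)^{-|N(\Gamma)|}$. Combined with~\eqref{eq:Rdef}, this yields termwise (for each fixed $j$)
\begin{align*}
\sum_{\substack{\Gamma \in \cC\\ \|\Gamma\|=j}} w(\Gamma)\left( \|\Gamma\| - \frac{\lam}{1+\lam} |N(\Gamma)| \right)
=\lam \frac{\partial}{\partial \lam}\!\left[ N R_j(\lam,d)(1+\lam)^{-jd} \right]
= \lam N\!\left[ \frac{\frac{\partial}{\partial\lam}R_j(\lam,d)}{(1+\lam)^{jd}} - \frac{jdR_j(\lam,d)}{(1+\lam)^{jd+1}}\right].
\end{align*}
Summing this from $j=1$ to $k$ produces the explicit main term in the statement.

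For the tail, I would use $|N(\Gamma)|\le d\|\Gamma\|$ to bound
\begin{align*}
\left| \sum_{\substack{\Gamma\in\cC\\ \|\Gamma\|\geq k+1}} w(\Gamma)\left(\|\Gamma\|-\frac{\lam}{1+\lam}|N(\Gamma)|\right) \right|
\le (1+d)\sum_{\substack{\Gamma\in\cC\\ \|\Gamma\|\geq k+1}} |w(\Gamma)|\,\|\Gamma\|,
\end{align*}
and then apply Lemma~\ref{lem:strongtail} with $t=k+1$ and $\ell=1$ to conclude that this is $O\bigl(2^d d^{2k+1}(1+\lam)^{-d(k+1)}\bigr)$, matching the error in the statement. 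Adding the two pieces and the leading term $\lam N/(1+\lam)$ from Lemma~\ref{lem:expexpressions} finishes the proof.

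I do not expect any real obstacle: the differentiation identity is one line, the tail estimate is precisely what Lemma~\ref{lem:strongtail} was set up to provide, and absolute convergence (Theorem~\ref{thm:clusterexp}) justifies the termwise manipulations. The only mildly delicate point is tracking the factor of $d$ lost when passing from $|N(\Gamma)|$ to $\|\Gamma\|$, which accounts for the $d^{2k+1}$ (rather than $d^{2k}$) in the error term.
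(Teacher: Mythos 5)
Your proposal is correct and follows essentially the same route as the paper: start from Lemma~\ref{lem:expexpressions}, split the cluster sum at $\|\Gamma\|=k$, control the tail via $|N(\Gamma)|\le d\|\Gamma\|$ and Lemma~\ref{lem:strongtail}, and recover the main term from the definition of $R_j$ at~\eqref{eq:Rdef}. Your explicit identity $\lam\,\partial_\lam w(\Gamma)=w(\Gamma)\bigl(\|\Gamma\|-\tfrac{\lam}{1+\lam}|N(\Gamma)|\bigr)$ is exactly the step the paper leaves implicit, and your accounting for the extra factor of $d$ in the error term matches the paper's.
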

\begin{proof}
By Lemmas~\ref{lem:strongtail} and Lemma~\ref{lem:expexpressions}, noting that $|N(\Gamma)|\leq d\|\Gamma\|$ for any cluster $\Gamma$, we have
\begin{align*}
\E_{\cO,\lam} ( | \mathbf I |) 
= \frac{\lam}{1+ \lam} N +  \sum_{\substack{\Gamma \in \mathcal{C} \\ \|\Gamma\| \leq k}}  w(\Gamma)\left(\|\Gamma\|- \frac{\lam}{1+\lam}|N(\Gamma)|\right) + O \left ( \frac{2^d d^{2k+1}}{(1+ \lam)^{d(k+1)} }\right )\, .
\end{align*}
The result follows by recalling the definition of $R_j(\lam, d)$ at~\eqref{eq:Rdef}\, .
\end{proof}

\section{Independent sets of a given size}  
\label{secGivenSize}

Theorem~\ref{thmFixedSizelam} will follow from several lemmas.  The first says that almost all independent sets of size $m =  \lfloor \beta N \rfloor$ are accounted for by exactly one of the two polymer distributions.  The second says that if we find $\lam_\beta$ so that the expected size of the independent set drawn from $\mu_{\cO, \lam_\beta}$ (as in Definition~\ref{defmu}) is close to $m$ then we have an asymptotic formula for the number of independent sets of size $m$ in terms of $\lam_\beta$ and $Z_{\cO}(\lam_{\beta})$.  The third lemma gives an efficiently computable formula for a suitable such $\lam_\beta$.   We will then prove Theorem~\ref{thmFixedSize} by analyzing  expansions of  $\log Z_{\cO}(\lam_{\beta})$ and $\log \lam_\beta$ in  powers of $(1-\beta)^d$. 

  Let $i_m(\cO) $ be the number of independent sets $I$ of size $m$ in $Q_d$ that are achievable in the odd polymer model (i.e.\ $\mu_{\cO,\lam}(I)>0$).

\begin{lemma}
\label{lemimapprox}
For any $\beta >0$, 
\[ i_{\lfloor \beta N \rfloor} (Q_d) = (2+o(1)) i_{\lfloor \beta N \rfloor} (\cO) \]
as $d \to \infty$. 
\end{lemma}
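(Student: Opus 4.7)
My plan is to prove this by comparing $\mu_\lam$ and the polymer surrogate $\mu_{\cO,\lam}$ directly, using the identity~\eqref{eqIkIdentity} together with the results of Theorem~\ref{lemPolyApprox} and Theorem~\ref{thm:clusterexp}.  Fix $\beta \in (0,1)$ and take $\lam = \beta/(1-\beta)$, which is a constant in $(0,\infty)$ and in particular satisfies $\lam \ge C\log d/d^{1/3}$ for all large $d$.  Writing $m=\lfloor \beta N\rfloor$, the identity~\eqref{eqIkIdentity} applied to the hard-core model and to the polymer-derived measure $\mu_{\cO,\lam}$ (supported on the achievable independent sets) gives
\[
i_m(Q_d) = \frac{Z(\lam)}{\lam^m}\mu_\lam(\cI_m), \qquad i_m(\cO) = \frac{Z_{\cO}(\lam)}{\lam^m}\mu_{\cO,\lam}(\cI_m).
\]

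From Theorem~\ref{lemPolyApprox} I immediately get $Z(\lam) = (2+O(e^{-N/d^4}))Z_\cO(\lam)$.  For the measure ratio, the parity-flipping automorphism of $Q_d$ (e.g.\ $x\mapsto x\oplus e_1$) swaps $\cO$ and $\cE$, hence $\mu_{\cO,\lam}(\cI_m) = \mu_{\cE,\lam}(\cI_m)$, so $\hat\mu_\lam(\cI_m) = \mu_{\cO,\lam}(\cI_m)$ and the TV bound in Theorem~\ref{lemPolyApprox} yields $\mu_\lam(\cI_m) = \mu_{\cO,\lam}(\cI_m) + O(e^{-N/d^4})$.  Putting these two estimates together,
\[
\frac{i_m(Q_d)}{2\, i_m(\cO)} \;=\; (1+o(1))\left(1 + O\!\left(\frac{e^{-N/d^4}}{\mu_{\cO,\lam}(\cI_m)}\right)\right),
\]
so the entire problem reduces to proving the lower bound $\mu_{\cO,\lam}(\cI_m) = \omega(e^{-N/d^4})$.

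This last step is the main obstacle, because at this point in the paper we do not yet have a local CLT to hand; I will sidestep it by conditioning on an empty polymer configuration.  If $\mathbf\Gamma = \emptyset$ in Step~1 of Definition~\ref{defmu}, then Step~2 just includes each vertex of $\cE$ independently with probability $\lam/(1+\lam) = \beta$, so under this conditioning $|\mathbf I| \sim \bin(N,\beta)$ and the elementary local limit theorem for the binomial gives
\[
\P\!\left(\bin(N,\beta) = \lfloor \beta N\rfloor\right) = \Theta(N^{-1/2}).
\]
On the other hand, $\mu_{\cO,\lam}(\mathbf\Gamma = \emptyset) = 1/\Xi_{\cO}$ and Theorem~\ref{thm:clusterexp} (with $k=1$) yields $\log \Xi_\cO = O(N(1+\lam)^{-d})$.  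Because $\lam$ is a positive constant, $(1+\lam)^d$ grows exponentially in $d$ and dominates $d^4$, so $N(1+\lam)^{-d} = o(N/d^4)$ and therefore $\mu_{\cO,\lam}(\mathbf\Gamma=\emptyset) = e^{-o(N/d^4)}$.

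Combining the two pieces gives
\[
\mu_{\cO,\lam}(\cI_m) \;\ge\; \mu_{\cO,\lam}(\mathbf\Gamma = \emptyset)\cdot \P(\bin(N,\beta)=m) \;=\; \Omega\!\left(N^{-1/2}\right) e^{-o(N/d^4)} \;=\; \omega(e^{-N/d^4}),
\]
which plugs into the earlier display to give $i_m(Q_d) = (2+o(1)) i_m(\cO)$, as required.  The only subtle point is the empty-polymer conditioning; the rest is routine bookkeeping with the bounds already established in Section~\ref{secPolymers}.
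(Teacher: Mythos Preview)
Your argument is correct, and it differs from the paper's proof in a useful way.  Both proofs start identically: use the identities $i_m(Q_d)=Z(\lam)\lam^{-m}\mu_\lam(\cI_m)$ and $i_m(\cO)=Z_\cO(\lam)\lam^{-m}\mu_{\cO,\lam}(\cI_m)$, invoke Theorem~\ref{lemPolyApprox} for $Z(\lam)\sim 2Z_\cO(\lam)$ and $|\mu_\lam(\cI_m)-\mu_{\cO,\lam}(\cI_m)|=O(e^{-N/d^4})$, and reduce the lemma to showing $\mu_{\cO,\lam}(\cI_m)\gg e^{-N/d^4}$.  The paper obtains this lower bound by first proving Lemma~\ref{lemImZ} (the local CLT for $|\mathbf I|$ under $\mu_{\cO,\lam}$), which gives the exact asymptotic $\mu_{\cO,\lam}(\cI_m)\sim (1+\lam)/\sqrt{2\pi N\lam}$, and this requires choosing $\lam$ so that~\eqref{eqLamClose} holds.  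You instead fix the simple fugacity $\lam=\beta/(1-\beta)$, condition on $\mathbf\Gamma=\emptyset$ (probability $1/\Xi_\cO=e^{-o(N/d^4)}$ by the cluster-expansion tail bound in Theorem~\ref{thm:clusterexp}), and use only the elementary binomial local estimate to get $\mu_{\cO,\lam}(\cI_m)\ge \Omega(N^{-1/2})e^{-o(N/d^4)}$.  Your route is strictly more elementary and self-contained for this lemma --- it avoids both Lemma~\ref{lemImZ} and the existence of a fugacity satisfying~\eqref{eqLamClose} --- at the cost of yielding only a crude lower bound rather than the sharp $\Theta(N^{-1/2})$ value.  Since the lemma only needs the crude bound, this is a net simplification; the paper's ordering is presumably chosen because Lemma~\ref{lemImZ} is needed anyway for Theorem~\ref{thmFixedSizelam}.
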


\begin{lemma}
\label{lemImZ}
Fix $\beta>0$.  Suppose $\lam = \lam(\beta, d)$ is such that 
\begin{equation}
\label{eqLamClose}
\left | \E_{\cO,\lam} | \mathbf I |   -\lfloor \beta N \rfloor  \right|  =  o( N^{1/2})   \,.
\end{equation}
Then
\[ i_{\lfloor \beta N \rfloor}(\cO) = (1+o(1))  \frac{ (1+\lam)   Z_{\cO}(\lam)  }{  \lam^{\lfloor \beta N \rfloor}  \sqrt{2 \pi N \lam} }   \, .\]
\end{lemma}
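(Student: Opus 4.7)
The plan is to reduce Lemma~\ref{lemImZ} to a local central limit theorem (LCLT) for $|\mathbf I|$ under $\mu_{\cO,\lam}$, and then prove that LCLT by conditioning on the polymer configuration and invoking the classical binomial LCLT.

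The reduction is immediate. Since $\mu_{\cO,\lam}(I) = \lam^{|I|}/Z_\cO(\lam)$ for every independent set $I$ achievable in the odd polymer model,
\[
i_m(\cO) = \frac{Z_\cO(\lam)}{\lam^m}\,\mu_{\cO,\lam}(|\mathbf I| = m),
\]
so with $m=\lfloor \beta N\rfloor$ the lemma is equivalent to
\[
\mu_{\cO,\lam}(|\mathbf I|=m) = \frac{1+o(1)}{\sqrt{2\pi N p(1-p)}},\qquad p:=\frac{\lam}{1+\lam},
\]
using $\sqrt{N p(1-p)} = \sqrt{N\lam}/(1+\lam)$ to recover the constant $(1+\lam)/\sqrt{2\pi N\lam}$.

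For this LCLT I would condition on $\mathbf\Gamma\sim \nu_\cO$. By Definition~\ref{defmu}, given $\mathbf\Gamma$, one has $|\mathbf I| = \|\mathbf\Gamma\| + X$ with $X\mid\mathbf\Gamma\sim\bin(N-|N(\mathbf\Gamma)|,p)$, so
\[
\mu_{\cO,\lam}(|\mathbf I|=m) = \E_{\mathbf\Gamma}\!\bigl[\P(X=m-\|\mathbf\Gamma\|\mid \mathbf\Gamma)\bigr].
\]
The cluster-expansion identity $\kappa_\ell(\|\mathbf\Gamma\|) = \sum_\Gamma w(\Gamma)\|\Gamma\|^\ell$ and the analogous expression for $|N(\mathbf\Gamma)|$ (obtained by perturbing weights as in the proof of Lemma~\ref{lemGamCLT}), together with Lemma~\ref{lem:strongtail} and the bound $|N(\Gamma)|\le d\|\Gamma\|$, give $\var(\|\mathbf\Gamma\|),\var(|N(\mathbf\Gamma)|) = O(2^d d^2/(1+\lam)^d) = o(N)$ in the relevant regime where $\beta\in(0,1)$ is fixed and $\lam=\Theta(1)$. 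Chebyshev then produces a high-probability event $\mathcal G$ on which both $\|\mathbf\Gamma\|-\E\|\mathbf\Gamma\|$ and $|N(\mathbf\Gamma)|-\E|N(\mathbf\Gamma)|$ are $o(\sqrt N)$. Combining Lemma~\ref{lem:expexpressions}, which gives $\E_{\cO,\lam}|\mathbf I| = Np + \E[\|\mathbf\Gamma\| - p|N(\mathbf\Gamma)|]$, with the hypothesis~\eqref{eqLamClose}, the binomial shift then obeys
\[
\bigl|m - \|\mathbf\Gamma\| - (N-|N(\mathbf\Gamma)|)p\bigr| = o(\sqrt N) = o\!\bigl(\sqrt{(N-|N(\mathbf\Gamma)|)p(1-p)}\bigr)
\]
on $\mathcal G$. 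The classical binomial LCLT then supplies the conditional probability $(1+o(1))/\sqrt{2\pi Np(1-p)}$ on $\mathcal G$, while the contribution from $\mathcal G^c$ is absorbed into the error via the crude max-probability bound $\P(X=k\mid\mathbf\Gamma)=O(1/\sqrt N)$ (valid as soon as $|N(\mathbf\Gamma)|\le N/2$, and the remaining configurations have exponentially small $\nu_\cO$-weight).

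The chief obstacle will be strengthening the Chebyshev step so that $\nu_\cO(\mathcal G^c) = o(1/\sqrt N)$, which is what is needed for the $\mathcal G^c$ contribution to be absorbed into the $1+o(1)$ factor rather than merely into $o(1)$. This follows from the exponential moment estimate $\sum_\Gamma |w(\Gamma)|\,e^{\gamma(d,\|\Gamma\|)} \le 2^{d-1}d^{-3/2}$ recorded in the proof of Lemma~\ref{lemGamCLT}, which controls all cumulants of $\|\mathbf\Gamma\|$ and $|N(\mathbf\Gamma)|$ and yields super-polynomial tail bounds, so no machinery beyond what is already in Section~\ref{secPolymers} is required.
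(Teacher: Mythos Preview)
Your approach is essentially the paper's: condition on $\mathbf\Gamma$, use a large-deviation bound to restrict to $|N(\mathbf\Gamma)|\le 2N/d$ (so the binomial point probabilities are uniformly $O(1/\sqrt N)$), then show that whp in $\mathbf\Gamma$ the binomial shift is $o(\sqrt N)$ so Lemma~\ref{lemShiftBinomial} applies. The paper cites Lemma~\ref{lemGamCLT} for this last step where you invoke Chebyshev, but either works since $\var(\|\mathbf\Gamma\|),\var(|N(\mathbf\Gamma)|)=o(N)$.

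Your final paragraph, however, identifies an obstacle that does not exist. You do \emph{not} need $\nu_\cO(\mathcal G^c)=o(1/\sqrt N)$. Once you have restricted to $|N(\mathbf\Gamma)|\le N/2$ (whose complement has exponentially small weight), the conditional binomial probability is uniformly $O(1/\sqrt N)$, so the contribution from $\mathcal G^c$ is at most $\nu_\cO(\mathcal G^c)\cdot O(1/\sqrt N)$. If $\nu_\cO(\mathcal G^c)=o(1)$, this is already $o(1/\sqrt N)$, i.e., $o(1)$ times the main term $(1+\lam)/\sqrt{2\pi N\lam}$, and is absorbed into the $1+o(1)$ factor. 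Plain Chebyshev with the variance bounds you already have suffices; no exponential-moment strengthening is needed.
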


\begin{lemma}
\label{lemPickLam}
There exists a sequence of rational functions $B_j(d,\beta)$, $j\in \mathbb N$, such that $B_j$ can be computed in time $e^{O( j \log j)}$ and the following holds.
Fix $t \ge 1$ and let $r=\lceil t/2 \rceil-1$. Suppose that $m = \lfloor \beta N \rfloor$ with $\beta > 1- 2^{-1/t} $, then if
\begin{equation}
\lam_{\beta} = \frac{\beta}{1-\beta} + \sum_{j=1}^r B_j(\beta, d) (1-\beta)^{jd}\, ,
\end{equation}
then
\[ \left | \E_{\cO,\lam_{\beta}} | \mathbf I |   -m  \right|  =  o( N^{1/2})   \,. \]
\end{lemma}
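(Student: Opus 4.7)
The approach is to apply Corollary~\ref{cor:expectation} with $k = r$, substitute the ansatz $\lam_\beta = \lam_0 + \sum_{j=1}^{r} B_j (1-\beta)^{jd}$ (with $\lam_0 := \beta/(1-\beta)$), Taylor expand everything in $\Delta := \lam_\beta - \lam_0$, and solve for $B_1, \dots, B_r$ by matching the coefficients of $N(1-\beta)^{jd}$ for $j = 1, \dots, r$ against zero. Since $\lam_0/(1+\lam_0) = \beta$ and $(1+\lam_0)^{-1} = 1-\beta$, Corollary~\ref{cor:expectation} at $\lam = \lam_0$ gives
\[
\E_{\cO,\lam_0}|\mathbf I| - \beta N = N\sum_{j=1}^{r} a_j(\beta,d)(1-\beta)^{jd} + O\bigl(2^d d^{2r+1}(1-\beta)^{d(r+1)}\bigr),
\]
where each $a_j$ is an explicit rational function in $\beta$ and polynomial in $d$, determined by the values of $R_1, \dots, R_j$ and their $\lam$-derivatives at $\lam_0$.

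For $\lam = \lam_0 + \Delta$ the expansion $\tfrac{\lam}{1+\lam} - \beta = (1-\beta)^2 \Delta - (1-\beta)^3 \Delta^2 + \cdots$ combines with analogous Taylor expansions of $R_j(\lam,d)$ and $(1+\lam)^{-jd}$ around $\lam_0$. Inserting $\Delta = \sum_{i=1}^r B_i (1-\beta)^{id}$ and regrouping by powers of $(1-\beta)^d$ yields
\[
\E_{\cO,\lam_\beta}|\mathbf I| - \beta N = N \sum_{j=1}^{r} \bigl[(1-\beta)^2 B_j + F_j(B_1, \dots, B_{j-1}; \beta, d)\bigr](1-\beta)^{jd} + E,
\]
where $F_j$ depends only on previously determined $B_i$ together with $a_1, \dots, a_j$, and $E$ collects the truncation error from Corollary~\ref{cor:expectation} together with all Taylor contributions of order $(1-\beta)^{(r+1)d}$ or smaller. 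Setting each bracket to zero determines $B_j = -(1-\beta)^{-2} F_j(B_1, \dots, B_{j-1}; \beta, d)$ uniquely, as a rational function of $(\beta,d)$. For example, using $R_1 = \lam$, one finds $B_1 = \beta(d\beta-1)/(1-\beta)^3$. Since Theorem~\ref{thm:clusterexp} computes each $R_j$ in time $e^{O(j\log j)}$, and the construction of $F_j$ requires only finitely many rational operations on $R_1, \dots, R_j$ and $B_1, \dots, B_{j-1}$, a straightforward induction shows $B_j$ is computable in time $e^{O(j\log j)}$.

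The main obstacle --- and the source of the hypothesis $\beta > 1 - 2^{-1/t}$ --- is showing $E = o(\sqrt N)$. Since $\Delta = O((1-\beta)^d)$ and $\lam_\beta \to \lam_0$, we have $(1+\lam_\beta)^{-d(r+1)} = O((1-\beta)^{d(r+1)})$, so the truncation error is $O(2^d d^{2r+1}(1-\beta)^{d(r+1)})$. Writing $1-\beta = 2^{-c}$ with $c > 1/t$ (from $\beta > 1 - 2^{-1/t}$), and noting $r + 1 = \lceil t/2\rceil \geq t/2$, we obtain $c(r+1) > 1/2$ strictly, so $2^d(1-\beta)^{d(r+1)} = 2^{d(1 - c(r+1))} = o(2^{d/2})$ even after multiplication by the polynomial factor $d^{2r+1}$. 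The Taylor remainder $N(1-\beta)^{d(r+1)}$ is bounded identically, giving $E = o(\sqrt N)$. This produces $\E_{\cO,\lam_\beta}|\mathbf I| = \beta N + o(\sqrt N) = \lfloor \beta N \rfloor + o(\sqrt N)$, as required.
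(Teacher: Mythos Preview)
Your proof is correct and follows essentially the same approach as the paper: apply Corollary~\ref{cor:expectation} with $k=r$, expand in the perturbation $\Delta=\lam_\beta-\lam_0$, collect coefficients of $(1-\beta)^{jd}$, and solve a triangular linear system for the $B_j$ (the paper uses the equivalent substitution $X=(1-\beta)\Delta$ and the identity $(1+\lam)^{-k}=(1-\beta)^k(1+X)^{-k}$ to organize the same expansion). Your error analysis via $c(r+1)>1/2$ is exactly the paper's argument, and your computation of $B_1$ matches.
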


We prove Lemma~\ref{lemImZ} first, for which we need the following basic binomial local central limit result.
\begin{lemma}
\label{lemShiftBinomial}
Fix $p \in (0,1)$ and suppose $X \sim \text{Bin}(n,p)$. Suppose $n \to \infty$ and $k=o(\sqrt{n})$, then 
\[ \mathbb{P}(X= np +k) = \frac{1+o(1)}{\sqrt{ 2\pi n p(1-p)}} \,.\]
\end{lemma}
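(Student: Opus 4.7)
The plan is to prove this as a direct calculation via Stirling's approximation. Write $m = np+k$ (assuming as usual that $m$ is a positive integer close to $np$, with $|m-np|=o(\sqrt n)$; the asymmetry between $np+k$ and a specified integer value causes only an additive $O(1)$ adjustment in $k$, which is absorbed into the assumption $k=o(\sqrt n)$). The goal is to show
\begin{equation*}
\binom{n}{m} p^m (1-p)^{n-m} = \frac{1+o(1)}{\sqrt{2\pi np(1-p)}}.
\end{equation*}

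Applying Stirling's formula $n! = (1+o(1))\sqrt{2\pi n}(n/e)^n$ separately to $n!$, $m!$ and $(n-m)!$, valid since both $m$ and $n-m$ tend to infinity with $n$, one obtains
\begin{equation*}
\binom{n}{m} p^m (1-p)^{n-m} = \frac{1+o(1)}{\sqrt{2\pi}}\sqrt{\frac{n}{m(n-m)}} \left(\frac{np}{m}\right)^{\!m} \left(\frac{n(1-p)}{n-m}\right)^{\!n-m}.
\end{equation*}
For the algebraic prefactor, $m/n \to p$ and $(n-m)/n \to 1-p$, so $\sqrt{n/(m(n-m))} = (1+o(1))/\sqrt{np(1-p)}$.

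The main work is showing that the exponential factor $E:= (np/m)^m (n(1-p)/(n-m))^{n-m}$ is $1+o(1)$. Set $\alpha = k/n$, so $\alpha = o(n^{-1/2})$. I would Taylor expand
\begin{equation*}
\log E = -(np+k)\log\!\bigl(1+\tfrac{\alpha}{p}\bigr) - (n(1-p)-k)\log\!\bigl(1-\tfrac{\alpha}{1-p}\bigr)
\end{equation*}
to second order in $\alpha$, with cubic remainder of order $n\alpha^3 = k^3/n^2 = o(1)$ since $k=o(\sqrt n)$. The linear-in-$\alpha$ contributions combine to give $-k^2/(np(1-p))$ and the quadratic-in-$\alpha$ contributions combine to give $+k^2/(2np(1-p))$, for a net value $-k^2/(2np(1-p))$ plus $o(1)$ error. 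Since $k=o(\sqrt n)$, this total tends to $0$, so $E = 1+o(1)$, completing the proof.

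The only mildly delicate point is the bookkeeping in the Taylor expansion: one must verify that every cross term such as $k \cdot \alpha^2 = k^3/n^2$ and $n\alpha^3 = k^3/n^2$ is $o(1)$, and track that both the $\alpha/p$ and $\alpha/(1-p)$ expansions remain inside their radii of convergence for large $n$ (trivially true since $\alpha\to 0$). No other obstacle arises; this is the standard de Moivre--Laplace local limit theorem in the regime where the shift $k$ from the mean is $o(\sqrt n)$, so the Gaussian correction factor $e^{-k^2/(2np(1-p))}$ is itself $1+o(1)$.
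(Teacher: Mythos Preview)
Your argument is correct; this is exactly the standard de Moivre--Laplace computation, and your bookkeeping of the Taylor expansion is accurate (the net is $\log E = -k^2/(2np(1-p)) + o(1) = o(1)$). The paper does not actually supply a proof of this lemma: it is simply stated as a ``basic binomial local central limit result'' and then used, so your Stirling-based derivation is precisely the routine argument a reader would fill in.
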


\begin{proof}[Proof of Lemma~\ref{lemImZ}]
Let $m = \lfloor \beta N\rfloor$. For any $\lam>0$, we have 
\begin{align*}
i_m(\cO) &=  \frac{ Z_{\cO}(\lam)}{\lam^m}  \mathbb{P}_{\cO,\lam} [ | \mathbf I| = m] \,,
\end{align*}
where the probability is with respect to the measure $\mu_{\cO, \lam}$.
We therefore need to show that if $\lam$ satisfies~\eqref{eqLamClose}, then
\begin{equation}
\label{eqProbBin1}
 \mathbb{P}_{\cO,\lam} [ | \mathbf I| = m]  = (1+o(1)) \frac{1+ \lam }{   \sqrt{2 \pi N \lam}  }   \,.
\end{equation}

By considering first the collection of polymers $\mathbf \Gamma$ chosen at Step 1 in the definition of $\mu_{\cO, \lam}$ (Definition~\ref{defmu}), and then the probability that the correct number of additional vertices are chosen at Step 2, we see that
\begin{align}\label{eqpolybinom1}
 \mathbb{P}_{\cO,\lam} [ | \mathbf I| = m] 
 =& \sum_{\Gamma\in\Omega_{\cO}}\mathbb{P}\left[\mathbf \Gamma = \Gamma \right] \cdot 
  \mathbb{P} \left [ \text{Bin}\left(N-|N( \Gamma)|, \frac{\lam}{1+\lam}\right) = m - \|\Gamma\|  \right]\, ,
\end{align}
where we recall that $\Omega_\cO$ denotes the set of all collections of mutually compatible odd polymers.
By the large deviation bound \cite[Lemma 16]{jenssen2020independent} we have
\[
\mathbb{P}\left[|N(\mathbf \Gamma)|\geq \frac{2N}{d}\right]=O( \exp(-N/d^4))\, ,
\]
and so we can condition on the event $|N(\mathbf \Gamma)|\leq \frac{2N}{d}$ throughout~\eqref{eqpolybinom1} and only change the resulting probability by an additive factor of $O( \exp(-N/d^4))=o(N^{-1/2})$. Under this conditioning, the binomial probabilities in~\eqref{eqpolybinom1} are uniformly bounded by $O(1/\sqrt{N})$. 
To establish~\eqref{eqProbBin1}, it therefore suffices to show that with high probability in the choice of $\mathbf \Gamma$, the binomial probabilities in~\eqref{eqpolybinom1} are in fact equal to  $(1+o(1)) \frac{1+ \lam }{   \sqrt{2 \pi N \lam}  }$.
By Lemma~\ref{lemShiftBinomial}, it suffices to show that with high probability in the choice of $\mathbf \Gamma$ we have
\begin{align}\label{eqbinmean}
\frac{\lam}{1+\lam}\left(N-|N( \Gamma)|\right)=m-\|\Gamma\| + o(N^{1/2})\, .
\end{align}
Now, by our assumption on $\lam$, \eqref{eqclustercumulant} and Lemma~\ref{lem:expexpressions}, 
\[
m= \E_{\cO,\lam} | \mathbf I | +  o(N^{1/2}) = \E \|\mathbf \Gamma\|+ \frac{\lam}{1+\lam}\left(N-\E |N(\mathbf \Gamma)|\right) + o(N^{1/2})\, .
\]
It follows that to show~\eqref{eqbinmean} holds whp with respect to $\mathbf \Gamma$, it suffices to show that
\begin{align}
\mathbb{P}\left[\|\mathbf \Gamma\|= \E \|\mathbf \Gamma\|+o(N^{1/2})\right]=1+o(1)\, ,
\end{align}
and similarly for $|N(\mathbf \Gamma)|$. This is an immediate consequence of Lemma~\ref{lemGamCLT} and the fact that 
\begin{align}\label{eqEGamoN}
\E\|\mathbf\Gamma\|\leq\E|N(\mathbf \Gamma)|=\sum_{\Gamma\in\cC}w(\Gamma)|N(\Gamma)|=o(N)\, ,
\end{align}
where we have used~\eqref{eqclustercumulant} and Lemma~\ref{lem:strongtail}.
\end{proof}

Next we prove Lemma~\ref{lemimapprox}.
\begin{proof}[Proof of Lemma~\ref{lemimapprox}]
Let $\mathcal I_m$ denote the set of all independent sets of size $m$ in $Q_d$.
Then by Theorem~\ref{thm:TV} (and the symmetry between even and odd) we have
\begin{align*}
\left|\mu_{\lam}(\mathcal I_m) - \hat\mu_{\lam}(\mathcal I_m) \right|
=
\left| \frac{i_m(Q_d)\lam^m}{Z_{Q_d}(\lam)} -  \frac{i_m(\cO)\lam^m}{ Z_{\cO}(\lam) } \right|  =O\left(\exp ( -N/d^4)\right)\, .
\end{align*}

By Theorem~\ref{thm:TV} again it follows that 
\begin{align*}
\left| i_m(Q_d)- (2+o(1))i_m(\cO)\right| =O\left(\exp ( -N/d^4)\right)\frac{ Z_{\cO}(\lam)}{\lam^m}\, .
\end{align*}

It therefore suffices to show that there is a choice of $\lam$ such that $\frac{i_m(\cO)\lam^m}{ Z_{\cO}(\lam)}$ is much larger than $\exp ( -N/d^4)$.  This follows from  Lemma~\ref{lemImZ}  by choosing $\lam$ satisfying~\eqref{eqLamClose}. 
\end{proof}

Next we prove Lemma~\ref{lemPickLam}. In the following, if $P(x,y)$ is a polynomial in $x,y$, we write $\deg_x(P)$ for the degree of $P$ in $x$.
\begin{proof}[Proof of Lemma~\ref{lemPickLam}]
Let $r=\lceil t/2 \rceil - 1$ and set
\[
\lam=\lam_{\beta} = \frac{\beta}{1-\beta} + \sum_{j=1}^r B_j(\beta, d) (1-\beta)^{jd}\, ,
\]
where the functions $B_j$ are rational polynomials in $\beta, d$ of constant degree (independent of $d$) to be determined later. Let $X:=\sum_{j=1}^r B_j(\beta, d) (1-\beta)^{jd+1}$ and note that $X=o(1)$. It will be useful to note that for $k=O(d)$,
\begin{align}\label{eqnegbinom}
(1+\lam)^{-k}=\frac{(1-\beta)^{k}}{(1+X)^{k}}
= (1-\beta)^{k}\sum_{i=0}^{r}\binom{-k}{i}X^i + O\left(X^{r+1}\right) \, .
\end{align}
In particular, since $\beta > 1-2^{-1/t}$,
\begin{align}\label{eqosqrtN}
(1+\lam)^{-d(r+1)}=(1+o(1))(1-\beta)^{d(r+1)}=O\left(e^{-cd} \cdot 2^{-d(r+1)/t}\right)=O\left(e^{-cd} N^{-1/2}\right)\, ,
\end{align}
for some constant $c>0$.

By Corollary~\ref{cor:expectation} and Theorem~\ref{thm:clusterexp},
\begin{align}
\E_{\cO,\lam} ( | \mathbf I |) 
&= N \frac{\lam}{1+ \lam}  +  \lam N \sum_{j=1}^r \frac{ \frac{\partial}{\partial \lam}R_j(\lam,d)  }{ (1+\lam)^{jd}  }  - \frac{  jd R_j(\lam,d) }{ (1+\lam)^{jd+1}  }      +  O\left(N \frac{d^{2r+1}}{ (1+\lam)^{d(r+1)}} \right)\nonumber \\
&=
\frac{\lam}{1+ \lam} N + N \sum_{j=1}^r F_j(\lambda, d)(1+\lam)^{-jd-1}   +  o\left(N^{1/2} \right)\, , \label{eqexpF}
\end{align}
where the $F_j$ are polynomials in $\lam, d$ with $\deg_d(F_j)\leq 2j+1$ and $\deg_\lam(F_j)\leq 3j^2$.
Our goal is to show that there exists an appropriate choice of $B_1, \ldots, B_r$ that makes this final expression~\eqref{eqexpF} equal to $m+o\left(N^{1/2} \right)$.

Since $\deg_\lam(F_j)\leq 3j^2$, and $\lam=(\beta+X)/(1-\beta)$ we may write
\[
F_j(\lam,d)=(1-\beta)^{-c_j}G_j(\beta, d, X)
\]
for some non-negative integer $c_j\leq 3j^2$ and $G_j$ a polynomial in $\beta, d, X$ such that $\deg_d(G_j)\leq 2j+1$.
It follows by~\eqref{eqnegbinom} that 
\begin{align}\label{eqXexp}
\nonumber \frac{\lambda}{1+\lambda} + \sum_{j=1}^r F_j(\lambda, d) (1+\lambda)^{-jd-1}&=
(\beta+X)\sum_{i=0}^{r}(-X)^i \\
&~~ +  \sum_{j=1}^rG_j(\beta, d, X) (1-\beta)^{jd+1-c_j}\sum_{i=0}^{r}\binom{-jd-1}{i}X^i + O(d^{3r}X^{r+1})\, .
\end{align}
We now recall that $X=\sum_{j=1}^r B_j (1-\beta)^{jd+1}$ and we expand this final expression as a polynomial in $(1-\beta)^d$. This yields 
\begin{align}\label{eqQexpansion}
\frac{\lambda}{1+\lambda} + \sum_{j=1}^r F_j(\lambda, d) (1+\lambda)^{-jd-1}&=
\beta + \sum_{j=1}^{r} Q_j(\beta, d, B_1,\ldots, B_r)\cdot (1-\beta)^{jd} + O(d^{3r}X^{r+1})
\end{align}
where $Q_j=Q_j(\beta, d, B_1,\ldots, B_r)$ is a rational function $\beta, d, B_1, \ldots, B_r$ with 
denominator $(1-\beta)^{b_j}$ for some $b_j\leq 3j^2$ and with $\deg_d(Q_j)\leq 2j+1$. 
Moreover, by examining the expansion~\eqref{eqXexp}, we see that $Q_j$  is linear in $B_j$ where the coefficient of $B_j$ is $(1-\beta)^2$ (in particular the coefficient is non-zero). It follows inductively that there is a choice of $B_1, \ldots, B_r$ such that $Q_1=\ldots=Q_r=0$ where $B_j$ is a rational function of $\beta, d$ of constant degree (depending on $j$ but not $d$). With this choice of $B_1, \ldots, B_r$ it follows from~\eqref{eqexpF} and \eqref{eqQexpansion} that 
\begin{align*}
\E_{\cO,\lam} ( | \mathbf I |) = \beta N+ O(Nd^{3r}X^{r+1})=\beta N+o(N^{1/2})\, ,
\end{align*}
where for the last bound we used that $d^{3r}X^{r+1}=d^{O_r(1)}(1-\beta)^{d(r+1)}=o(N^{-1/2})$ by~\eqref{eqosqrtN}.

Finally we note that the above argument gives an algorithm for computing the $B_j$. Since the $R_j$, and so also the $F_j$ and $G_j$, can be computed in $e^{O(j \log j)}$ time, we see that the $Q_j$ can be computed in $e^{O(j \log j)}$ time. The $B_j$ can then be computed by solving $j$ successive linear equations. 
\end{proof}

To give a concrete example of the algorithm above in action, we pause for a moment to calculate the rational function $B_1$. Using the definition of $R_1$ at~\eqref{eq:Rdef}, we see that $R_1=\lam$.
 In the notation of the proof of Theorem~\ref{lemPickLam}, it follows that $F_1=\lam+(1-d)\lam^2$. Noting that $\lam=(\beta+X)/(1-\beta)$ we have
\begin{align*}
F_1=(1-\beta)^{-2}\left[(1-\beta)(\beta+X) + (1-d)(\beta+X)^2 \right]
\end{align*}
and so $G_1= (1-\beta)(\beta+X) + (1-d)(\beta+X)^2$ and $c_1=2$.
Recalling that $X:=\sum_{j=1}^r B_j (1-\beta)^{jd+1}$ and examining the coefficient of $(1-\beta)^d$ in \eqref{eqXexp}, we see that
\begin{align*}
Q_1=B_1(1-\beta)^2 + \frac{\beta(1-\beta)+(1-d)\beta^2}{1-\beta}\, .
\end{align*}
Solving $Q_1=0$ yields 
\begin{align}\label{eqB1}
B_1=\frac{(d\beta-1)\beta}{(1-\beta)^3}\, .
\end{align}

\subsection{Proof of Theorem~\ref{thmFixedSizelam}}

Combining Lemma~\ref{lemimapprox}, Lemma~\ref{lemImZ}, and Lemma~\ref{lemPickLam} gives us the proof of Theorem~\ref{thmFixedSizelam}.

\subsection{Proof of Theorem~\ref{thmFixedSize}}

We now prove Theorem~\ref{thmFixedSize}.  Given the formula~\eqref{thmFixedSize} from Theorem~\ref{thmFixedSizelam}, we need to extract the binomial coefficient $\binom{N}{ \lfloor \beta N \rfloor}$ and expand the logarithm of what remains. 

\begin{lemma}
\label{lemBinomialCoeff}
Fix $\beta \in (0,1)$. With $\lam_0 =  \frac{\beta}{1 - \beta}$,
\begin{align*}
\binom{N}{ \lfloor \beta N \rfloor} &= (1+o(1)) \frac{ (1+\lam_0)^N} { \lam_0^{ \lfloor \beta N \rfloor}  \sqrt{2\pi N \beta (1-\beta)} }
\end{align*}
as $N \to \infty$. 
\end{lemma}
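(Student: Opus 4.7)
The plan is to recognize the stated identity as the binomial local central limit theorem in disguise, applied to $\mathrm{Bin}(N, \beta)$. With $\lam_0 = \beta/(1-\beta)$ we have $\lam_0/(1+\lam_0) = \beta$ and $1/(1+\lam_0) = 1-\beta$, so for any integer $m$,
\[
\binom{N}{m}\frac{\lam_0^m}{(1+\lam_0)^N} \;=\; \binom{N}{m} \beta^m (1-\beta)^{N-m} \;=\; \Pr\bigl[\mathrm{Bin}(N,\beta) = m\bigr].
\]

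First I would set $m = \lfloor \beta N \rfloor$ and apply Lemma~\ref{lemShiftBinomial} with $n = N$, $p = \beta$, and $k = \lfloor \beta N \rfloor - \beta N \in [-1, 0]$. Since $|k| \le 1 = o(\sqrt{N})$, that lemma gives
\[
\Pr\bigl[\mathrm{Bin}(N,\beta) = \lfloor \beta N \rfloor\bigr] \;=\; \frac{1+o(1)}{\sqrt{2\pi N \beta (1-\beta)}}.
\]
Equating the two displays and solving for $\binom{N}{\lfloor \beta N \rfloor}$ yields the claimed asymptotic.

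There is essentially no obstacle: the lemma is a direct corollary of the binomial local CLT already quoted in the paper, and the choice $\lam_0 = \beta/(1-\beta)$ is precisely the fugacity that tilts the trivial lower bound $2\binom{N}{\lfloor \beta N\rfloor}$ into the partition function $Z_\cO(\lam_0)$ in the absence of any polymer contributions. An alternative route would be a direct Stirling expansion of $N!$, $\lfloor \beta N \rfloor!$, and $(N - \lfloor \beta N \rfloor)!$, handling the $O(1)$ gap between $\lfloor \beta N \rfloor$ and $\beta N$ via a first-order Taylor expansion of $\log$; this gives the same answer but is messier and does not reuse the infrastructure already set up.
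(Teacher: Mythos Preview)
Your argument is correct. The paper's own proof is the one-line remark ``The proof follows from Stirling's formula,'' i.e.\ a direct expansion of $N!$, $\lfloor\beta N\rfloor!$, and $(N-\lfloor\beta N\rfloor)!$. Your route instead repackages the identity as $\binom{N}{m}\lam_0^m(1+\lam_0)^{-N}=\Pr[\mathrm{Bin}(N,\beta)=m]$ and invokes Lemma~\ref{lemShiftBinomial}. Since Lemma~\ref{lemShiftBinomial} is itself a consequence of Stirling, the two approaches have the same content; yours has the minor advantage of reusing a lemma already stated in the paper rather than redoing the Stirling bookkeeping. One small wording point: in Lemma~\ref{lemShiftBinomial} the quantity $np+k$ must be an integer, so $k=\lfloor\beta N\rfloor-\beta N$ need not be an integer, but that is exactly how the lemma is meant to be applied and causes no trouble.
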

The proof follows from Stirling's formula.

\begin{proof}[Proof of Theorem~\ref{thmFixedSize}]
Given Lemma~\ref{lemBinomialCoeff}, Theorem~\ref{thmJPZ} and Theorem~\ref{thmFixedSizelam}, we are left to compute coefficients $P_j = P_j(\beta, d)$, $j \ge 1$, so that for $t \ge 1 $ and $\beta > 1- 2^{-1/t}$ we have
\begin{align}\label{eqPjexpansion}
 \log \left( \frac{1+\lam_\beta}{1+\lam_0}   \right) -   \beta  \log \frac{\lam_\beta}{\lam_0} +  \sum_{j=1}^{t-1} R_j(d,\lam_\beta) (1+\lam_\beta)^{-jd}    &=  \sum_{j=1}^{t-1} P_j \cdot (1-\beta)^{jd}  +o(N^{-1})
\end{align}
where $\lam_\beta$ is given by~\eqref{eqLamBetaDef}. 
We proceed by expanding each term on the left hand side of~\eqref{eqPjexpansion} 
as a power series in $(1-\beta)^d$. 
As in the proof of Lemma~\ref{lemPickLam} we set $X:=\sum_{j=1}^r B_j(\beta, d) (1-\beta)^{jd+1}$ where $r=\lceil t/2 \rceil-1$ and note that $X^t=o(N^{-1})$ since $\beta > 1- 2^{-1/t}$.
It follows by Taylor expansion that  
\begin{align}
\nonumber \log \left( \frac{1+\lam_\beta}{1+\lam_0}   \right) -   \beta  \log \frac{\lam_\beta}{\lam_0}
 &=
 \log(1+X)-\beta \log(1+X/\beta)\\
 &=
 \sum_{i=1}^{t-1}\frac{(-1)^{1+i}}{i}(1-\beta^{1-i})X^i + o(N^{-1})\label{eqlogexpansion}\, .
\end{align}
We now turn to the sum on the left hand side of~\eqref{eqPjexpansion}.
Since $R_j$ is a polynomial in $\lam_\beta, d$ such that 
$\deg_{\lam_\beta}(R_j)\leq 3j^2$ and $\deg_d(R_j)\leq 2j$
 and the fact that $\lam_\beta=(\beta+X)/(1-\beta)$, we may write
\[
R_j(\lam_\beta,d)=(1-\beta)^{-c_j}S_j(\beta, d, X)
\]
for some non-negative integer $c_j\leq 3j^2$ and $S_j$ a polynomial in $\beta, d, X$ such that $\deg_d(S_j)\leq 2j$.
It follows by~\eqref{eqnegbinom} that 
\begin{align}\label{eqRjexpansion}
\sum_{j=1}^t R_j(\lambda_\beta, d) (1+\lambda_\beta)^{-dj}&=
\sum_{j=1}^{t-1}S_j(\beta, d, X) (1-\beta)^{jd-c_j}\sum_{i=0}^{t-1}\binom{-jd}{i}X^i + O(d^{2t}X^t)\, .
\end{align}
We note that $O(d^{2t}X^t)=O(d^{3t}(1-\beta)^{td})=o(N^{-1})$.
To compute the $P_j$ we simply sum~\eqref{eqlogexpansion} and~\eqref{eqRjexpansion},
expand the powers of $X$, and collect the coefficients of $(1-\beta)^d, \ldots, (1-\beta)^{(t-1)d}$\, .
Finally we note that since $R_j, S_j$ and $B_j$ can each be computed in time $e^{O(j\log j)}$, $P_j$ can be computed in time $e^{O(j\log j)}$. 

\end{proof}

\subsubsection{Computation of $P_1, P_2$}
To illustrate the algorithm for computing the $P_j$ in Theorem~\ref{thmFixedSize}, 
we use it to compute $P_1$ and $P_2$. 
First we note that in~\cite{jenssen2020independent} it was shown that
\begin{align*}
R_1=\lam \text{\, and \, } R_2= \frac{(2\lam^3+\lam^4)d(d-1)-2\lam^2}{4}\, .
\end{align*}
It follows that (using the notation of the proof of Theorem~\ref{thmFixedSize})
\begin{align*}
S_1=\beta+X \text{\, and \, }
S_2=   \frac{1}{4} d(d - 1)  (2+ X -\b) (X+\b)^3 - \frac{1}{2} (1 - \b)^2 (X+\b)^2
\end{align*}
and $c_1=1$, $c_2=4$. 
Now, to compute $P_1, P_2$ we compute the coefficients of $(1-\beta)^d, (1-\beta)^{2d}$ in the sum of 
\eqref{eqlogexpansion} and~\eqref{eqRjexpansion}. 
The coefficient of $(1-\beta)^d$ in \eqref{eqlogexpansion} is $0$ and in \eqref{eqRjexpansion} it is
$\beta/(1-\beta)$ and so 
\begin{align*}
P_1=\frac{\beta}{1-\beta}\, .
\end{align*}
The coefficient of $(1-\beta)^{2d}$ in \eqref{eqlogexpansion} is 
\begin{align*}
\frac{1}{2\b}B_1^2(1-\beta)^3\, .
\end{align*}
The coefficient of $(1-\beta)^{2d}$ in \eqref{eqRjexpansion} is 
\begin{align*}
(1-d\b)B_1+(1-\b)^{-4}\left( \frac{1}{4} d(d - 1)  (2 -\b) \b^3 - \frac{1}{2} (1- \b)^2 \b^2 \right)\, .
\end{align*}
Recalling~\eqref{eqB1}, the formula for $B_1$, and summing the above two expressions yields
\begin{align*}
P_2= \frac{ d(d - 1)  ( 2- \b ) \b^3- 2(1- \b)^2 \b^2}{4(1-\b)^4}-\frac{\beta(1-d\b)^2}{2(1-\b)^3}\, .
\end{align*}

\section{Local central limit theorems for polymer models}
\label{secLCLT}

In the odd polymer model, let $T$ be a defect type, and $X_T$ be the random variable counting the number of defects of type $T$ in a sample from $\mu_{\cO, \lam}$.   Recall that $m_T = \E X_T$ and let $\sigma^2_T = \var(X_T)$. Moreover, let $n_T$ denote the number of polymers of type $T$ and let $w_T$ denote the weight $w(S)$ (defined at~\eqref{eqweightdef}) of a representative polymer $S$ of type $T$.

 Throughout this section we assume that $\lam \ge C\log d /d^{1/3}$ as in Theorem~\ref{lemPolyApprox} and probabilities and expectations are with respect to the odd polymer model.   The main result of this section is a multivariate local central limit theorem for the number of polymers of different types, extending the multivariate central limit theorem of~\cite[Theorem 6]{jenssen2020independent}.

\begin{theorem}
\label{thmMultiLCLTdefect}
Let $\cT_1$ and $\cT_2$ be two fixed sets of defect types so that for each $T \in \cT_1$, $m_T  \to \rho_T$ for some constant $\rho_T>0$, and for each $T \in \cT_2$, $m_T \to \infty$ as $d \to \infty$.  Let $ \{ k_T \}_{T \in \mathcal T_1}$ be a collection of non-negative integers and let $ \{ k_T \}_{T \in \mathcal T_2}$ be such that $k_T=  \lfloor m_T  +s_T \rfloor$ where $| s_T| = O(\sqrt{ m_T})$ for all $T \in \mathcal T_2$.  Then
\begin{align*}
\mathbb{P} \left(  \bigcap_{T \in \cT_1\cup \cT_2} X_T=k_T\right ) 
&= (1+o(1)) 
 \prod_{T \in  \cT_1} \frac{\rho_T^{k_T} e^{-\rho_T}}{(k_T)!}
\prod_{T \in \mathcal T_2} \frac{e^{ - \frac{ s_T^2}{2 m_T}}}{\sqrt{2 \pi m_T}}   \,.
\end{align*}
\end{theorem}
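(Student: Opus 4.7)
The plan is to prove the theorem by Fourier inversion applied to the joint characteristic function of the defect count vector $\mathbf X := (X_T)_{T \in \cT_1 \cup \cT_2}$. Writing $\phi(\boldsymbol\theta) := \E\, e^{i\langle \boldsymbol\theta, \mathbf X\rangle}$, Fourier inversion on the torus gives
$$\P\!\left(\bigcap_{T} \{X_T = k_T\}\right) = \frac{1}{(2\pi)^{|\cT_1 \cup \cT_2|}} \int_{[-\pi,\pi]^{\cT_1 \cup \cT_2}} e^{-i\langle \mathbf k, \boldsymbol\theta\rangle}\, \phi(\boldsymbol\theta)\, d\boldsymbol\theta.$$
The key point is that $\phi$ itself admits a clean cluster-expansion representation: extending $\boldsymbol\theta$ to all types by setting $\theta_T := 0$ for $T \notin \cT_1 \cup \cT_2$ and modifying the polymer weights to $w_{\boldsymbol\theta}(S) := w(S) e^{i\theta_{T(S)}}$ (where $T(S)$ denotes the type of $S$), the resulting partition function $\Xi_{\boldsymbol\theta}$ satisfies $\phi(\boldsymbol\theta) = \Xi_{\boldsymbol\theta}/\Xi_\cO$, and so
$$\log \phi(\boldsymbol\theta) = \sum_{\Gamma \in \cC} w(\Gamma)\left(e^{i \sum_{S \in \Gamma} \theta_{T(S)}} - 1\right).$$

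The next step is to isolate the dominant contributions in this expansion. A single-polymer cluster $\Gamma = \{S\}$ has $w(\Gamma) = w(S)$, so the single-polymer terms sum to $\sum_{T} (e^{i\theta_T} - 1) \sum_{S : T(S) = T} w(S)$, and the second sum equals $m_T$ up to multi-polymer corrections controlled by Lemma~\ref{lem:strongtail}. The total multi-polymer contribution to $\log\phi$ is bounded in modulus by $O(2^d d^2/(1+\lam)^{2d})$, which is $o(1)$ whenever $m_T$ is bounded (since $m_T = \Theta(1)$ forces $(1+\lam)^d \asymp 2^d$) and $o(1/\sqrt{m_T})$ for $T \in \cT_2$. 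Taylor expanding only the $\cT_2$ coordinates to second order then yields, uniformly on the central window $\cW = \{|\theta_T| \le \pi \text{ for } T \in \cT_1,\ |\theta_T| \le m_T^{-1/2 + \eps} \text{ for } T \in \cT_2\}$,
$$\log \phi(\boldsymbol\theta) = \sum_{T \in \cT_1} m_T(e^{i\theta_T} - 1) \;+\; i \langle \mathbf m_{\cT_2}, \boldsymbol\theta_{\cT_2}\rangle - \tfrac{1}{2} \boldsymbol\theta_{\cT_2}^\top \Sigma\, \boldsymbol\theta_{\cT_2} + o(1),$$
where $\Sigma$ is the covariance matrix of $\mathbf X_{\cT_2}$; by the cluster-expansion identity for cumulants, $\Sigma = \mathrm{diag}(m_T)_{T \in \cT_2}$ up to lower-order terms, since cross-cumulants come only from multi-polymer clusters. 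On $\cW$ the integrand therefore factorizes asymptotically into a Poisson characteristic function $\prod_{T \in \cT_1} \exp(m_T(e^{i\theta_T} - 1))$ and a Gaussian characteristic function over $\cT_2$. Standard Fourier inversion on each factor produces the targets $\rho_T^{k_T}e^{-\rho_T}/k_T!$ (using $m_T \to \rho_T$) and $(2\pi m_T)^{-1/2} e^{-s_T^2/(2 m_T)}$ (using $|s_T| = O(\sqrt{m_T})$, so the Gaussian density is continuous on the shrinking window).

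The main obstacle is the tail estimate on $\cW^c$. In the $\cT_2$ directions I would establish a quadratic bound of the form $\Re \log \phi(\boldsymbol\theta) \le -c \sum_{T \in \cT_2} m_T (1 - \cos \theta_T)$ valid on the entire torus, obtained by noting that the dominant single-polymer contribution to $\Re \log \phi$ is exactly $\sum_T m_T(\cos \theta_T - 1) \le 0$ and that the complex multi-polymer corrections are lower-order by Lemma~\ref{lem:strongtail}; this makes the contribution from $|\theta_T| > m_T^{-1/2 + \eps}$ superpolynomially small compared to the Gaussian peak. In the $\cT_1$ directions no decay is required since $\theta_T$ already ranges over the full period of the Poisson characteristic function; what must be checked is that the $o(1)$ error in the approximation of $\log\phi$ holds uniformly in $\theta_T \in [-\pi,\pi]$, which again reduces to the absolute bound in Lemma~\ref{lem:strongtail} applied to clusters of size at least two.
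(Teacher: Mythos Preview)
Your overall architecture matches the paper's: Fourier inversion, the cluster-expansion identity $\log\phi(\boldsymbol\theta)=\sum_{\Gamma\in\cC}w(\Gamma)\bigl(e^{i\langle\boldsymbol\theta,Y(\Gamma)\rangle}-1\bigr)$, and a quadratic bound on $\Re\log\phi$ to control the tails. The paper packages the main window differently, though: rather than directly approximating $\log\phi$ to within $o(1)$ on $\cW$, it quotes the multivariate CLT for the rescaled vector $\tilde X$ from~\cite{jenssen2020independent} to get pointwise convergence $\varphi_{\tilde X}\to\varphi_Y$, and then uses the quadratic bound (Lemma~\ref{lemBigtmulti}) as an integrable envelope so that dominated convergence finishes the job. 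This sidesteps the delicate bookkeeping you attempt.

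There is a genuine gap in your treatment of the multi-polymer terms on the main window. The assertion that their total contribution, $O\bigl(2^d d^2/(1+\lambda)^{2d}\bigr)$, is $o(1)$ is false once $\lambda<\sqrt2-1$. Your supporting remark that ``$m_T=\Theta(1)$ forces $(1+\lambda)^d\asymp 2^d$'' holds only for types of size~$1$: a type $T$ with $|T|=k$ and $m_T=\Theta(1)$ forces $(1+\lambda)^d$ to be of order $2^{d/k}$ up to polynomial factors, whence $2^d/(1+\lambda)^{2d}\asymp 2^{d(1-2/k)}$, which for $k\ge 2$ does not tend to $0$. The companion claim that this quantity is $o(1/\sqrt{m_T})$ for $T\in\cT_2$ fails similarly. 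The correct move in your framework is not to discard the multi-polymer clusters but to keep them in the Taylor expansion: their linear and quadratic parts are precisely the multi-polymer corrections to the mean vector and to $\Sigma$, and it is only the \emph{cubic} remainder that must be shown $o(1)$ on $\cW$. That bound comes from Lemma~\ref{lemYTkbounds} (all cumulants of $X_T$ are $(1+o(1))m_T$), not from the crude tail Lemma~\ref{lem:strongtail}.

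Your tail sketch also needs the care of the paper's proof of Lemma~\ref{lemBigtmulti}: because the Ursell weights $w(\Gamma)$ can be negative for multi-polymer clusters, ``lower order by Lemma~\ref{lem:strongtail}'' is not enough; the paper bounds the multi-polymer real part by $\sum_j\sum_{\Gamma\in\mathcal M_j}|w(\Gamma)|\langle t,Y(\Gamma)\rangle^2$ and then uses Cauchy--Schwarz together with Lemma~\ref{lemYTkbounds} to show this is $\sum_i t_i^2\,o(n_{T_i}w_{T_i})$, which is what allows it to be absorbed into the dominant quadratic.
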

The probability in the theorem statement is with respect to the odd polymer model, but the statement also holds for defects of an independent set drawn from the hard-core model on $Q_d$ via Theorem~\ref{lemPolyApprox}.

Before we proceed it will be useful to recall a result from \cite{jenssen2020independent}
on the cumulants of the random variables $X_T$. 
Recall  that for a random variable $X$ we use $\kappa_k(X)$ to denote the $k$th cumulant of $X$. The following result appears as Lemma 20 in \cite{jenssen2020independent}.
\begin{lemma}
\label{lemYTkbounds}
For a defect type $T$, let $Y_T(\Gamma)$ denote the number of polymers of type $T$ in the cluster $\Gamma$.  Then for any fixed $k \ge 1$, 
\begin{equation}
\label{eqsumYk2}
 \kappa_k(X_T)= \sum_{\Gamma\in \cC} w(\Gamma) Y_T(\Gamma)^k = (1+o(1)) n_T w_T \,,
\end{equation}
and
\begin{equation}
\label{eqsumYk1}
 \sum_{\substack{\Gamma \in \mathcal{C} \\ \|\Gamma\| > |T|}}
 \left | w(\Gamma) Y_T(\Gamma)^k \right| = o( n_T w_T) \,.
\end{equation}
\end{lemma}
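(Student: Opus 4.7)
The plan is to prove the three assertions of the lemma in turn: the cluster-expansion formula for $\kappa_k(X_T)$, the identification of its leading term as $n_T w_T$, and the tail bound~\eqref{eqsumYk1} on clusters with $\|\Gamma\| > |T|$.

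First I would derive the cluster-expansion formula for $\kappa_k(X_T)$ via the same perturbation argument that yields~\eqref{eqclustercumulant}. Define the odd polymer model with modified weights $w_t(S) = w(S)\exp\bigl(t\,\mathbf{1}[S \text{ has type } T]\bigr)$, and let $\Xi_t$ denote its partition function. Since $X_T$ is additive over the polymers of a $\nu_\cO$-sample, a direct computation gives $\E_{\nu_\cO} e^{tX_T} = \Xi_t/\Xi_\cO$, so the cumulant generating function $h_t(X_T)$ equals $\log \Xi_t - \log \Xi_\cO$. Applying the cluster expansion to $\log \Xi_t$ and noting that a cluster $\Gamma$ now carries weight $w(\Gamma) e^{t Y_T(\Gamma)}$, differentiation in $t$ followed by evaluation at $t=0$ yields
\[
\kappa_k(X_T) = \sum_{\Gamma \in \cC} w(\Gamma) Y_T(\Gamma)^k,
\]
with term-by-term differentiation justified by the absolute convergence guaranteed by Theorem~\ref{thm:clusterexp}.

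Next I would isolate the dominant contribution. A cluster $\Gamma$ with $Y_T(\Gamma) \ge 1$ must satisfy $\|\Gamma\| \ge |T|$, with equality precisely when $\Gamma$ is a singleton tuple $(S)$ for some polymer $S$ of type $T$; in that case $\phi(H(\Gamma)) = 1$, $Y_T(\Gamma) = 1$, and $w(\Gamma) = w_T$. Summing over the $n_T$ singleton clusters of type $T$ contributes exactly $n_T w_T$, so~\eqref{eqsumYk2} reduces to the tail bound~\eqref{eqsumYk1}.

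The hardest step is the tail estimate itself. Using $Y_T(\Gamma) \le \|\Gamma\|/|T|$ together with Lemma~\ref{lem:strongtail} applied with $t = |T|+1$ and $\ell = k$, I would bound
\[
\sum_{\|\Gamma\| > |T|} |w(\Gamma)|\, Y_T(\Gamma)^k \le |T|^{-k}\!\!\sum_{\|\Gamma\| \ge |T|+1}\!\!|w(\Gamma)|\,\|\Gamma\|^k = O\bigl(2^d d^{2|T|}(1+\lam)^{-d(|T|+1)}\bigr).
\]
The obstacle is then the comparison with $n_T w_T$: I would use the standard lower bound $n_T = \Omega_T(N)$ (embedding the underlying graph of $T$ into $Q_d$ rooted at an odd vertex yields at least one polymer per odd vertex, up to $\aut(T)$) and $w_T \ge \lam^{|T|}(1+\lam)^{-d|T|}$ (since $|N(S)| \le d|T|$) to conclude that $n_T w_T = \Omega_T\bigl(N \lam^{|T|}(1+\lam)^{-d|T|}\bigr)$. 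The ratio of the tail bound to $n_T w_T$ is therefore $O_T\bigl(d^{O(1)}/(1+\lam)^d\bigr)$, which is $o(1)$ under the standing hypothesis $\lam \ge C\log d / d^{1/3}$ because $(1+\lam)^d$ grows superpolynomially in $d$ in that regime. This proves~\eqref{eqsumYk1}, and combined with the preceding paragraph gives~\eqref{eqsumYk2}.
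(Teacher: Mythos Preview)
The paper does not prove this lemma; it simply quotes it as Lemma~20 of \cite{jenssen2020independent}, so there is no in-paper argument to compare against. Your proposal is therefore really a reconstruction of that cited result, and it is correct and follows the natural line: the cumulant identity via the tilted polymer model, the singleton clusters of type $T$ supplying exactly $n_T w_T$, and the tail handled by Lemma~\ref{lem:strongtail} combined with a lower bound on $n_T w_T$.

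Two small remarks. First, the justification for termwise differentiation should appeal to Lemma~\ref{lem:strongtail} (or the underlying Koteck\'y--Preiss bound) rather than Theorem~\ref{thm:clusterexp} alone, since you need uniform control of $\sum_\Gamma |w(\Gamma)|\,Y_T(\Gamma)^k e^{tY_T(\Gamma)}$ in a neighbourhood of $t=0$; the inequality $Y_T(\Gamma)\le \|\Gamma\|$ together with Lemma~\ref{lem:strongtail} supplies this. Second, your lower bound $n_T=\Omega_T(N)$ is most cleanly obtained from vertex-transitivity of $Q_d$ on $\cO$ (translate any one polymer of type $T$ by all even vectors), which gives $n_T\ge N/|T|$ whenever the type is realised at all; the informal ``embedding'' phrasing is fine but this is the underlying mechanism. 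With these points made explicit your argument is complete.
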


 Given a random vector $X=(X_1, \ldots, X_q)\in\R^q$ its characteristic function is
\[
\varphi_X(t)=\E e^{i\langle X, t \rangle}\, 
\]
for $t \in \R^q$. 

\begin{lemma}
\label{lemBigtmulti}
Fix $q\in\mathbb N$ and a list $T_1, \ldots, T_q$ of defect types. Let $X=(X_{T_1}, \ldots, X_{T_q})$.  There exists $c>0$ such that
\[
 |\varphi_{X}(t) | \le \exp\left\{- c \sum_{i=1}^q t_i^2 n_{T_i}w_{T_i}\right\} \, ,
 \]
for all $t\in[-\pi,\pi]^q$.
\end{lemma}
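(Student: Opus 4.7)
The plan is to realize $\varphi_X(t)$ as a ratio of polymer partition functions with complex-modified weights and then exploit the cluster expansion, separating single-polymer clusters of the tracked types $T_1, \dots, T_q$ from everything else. Specifically, define $\Xi(t)$ to be the odd polymer model partition function in which every polymer weight $w(S)$ is multiplied by $e^{it_j}$ whenever $S$ has type $T_j$ (weights of polymers of other types are unchanged), so that $\varphi_X(t) = \Xi(t)/\Xi_\cO$. Since $|e^{it_j}|=1$, the modified polymer weights have the same absolute values as the originals and the cluster expansion for $\log\Xi(t)$ converges absolutely under the same Koteck\'y--Preiss input used in Theorem~\ref{thm:clusterexp}. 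Subtracting the two expansions and taking real parts will give
\begin{align*}
\log|\varphi_X(t)| = \sum_{\Gamma \in \cC} w(\Gamma) \left( \cos\left( \sum_{j=1}^{q} t_j Y_{T_j}(\Gamma) \right) - 1 \right)\, ,
\end{align*}
where $Y_{T_j}(\Gamma)$ is the number of polymers of type $T_j$ in $\Gamma$, as in Lemma~\ref{lemYTkbounds}.

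Next I would extract the dominant negative quadratic term. Clusters consisting of a single polymer of type $T_j$ contribute exactly $n_{T_j} w_{T_j}(\cos(t_j)-1)$ in total (summed over all polymers of that type), while single-polymer clusters of any type outside $\{T_1,\dots,T_q\}$ contribute $0$. Jordan's inequality $1-\cos(t_j) \ge \tfrac{2}{\pi^2} t_j^2$, valid on $[-\pi,\pi]$, converts this single-polymer contribution into $-\tfrac{2}{\pi^2}\sum_{j=1}^q t_j^2 n_{T_j} w_{T_j}$, which is exactly the target upper bound with a concrete constant.

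It then remains to bound the error sum $B$ over clusters that are \emph{not} single-polymer clusters of any $T_j$. I would use $|\cos x - 1| \le x^2/2$ together with Cauchy--Schwarz $\bigl(\sum_j t_j Y_{T_j}(\Gamma)\bigr)^2 \le q \sum_j t_j^2 Y_{T_j}(\Gamma)^2$ to reduce $|B|$ to a sum of the form $\frac{q}{2}\sum_j t_j^2 \sum_{\Gamma:\, Y_{T_j}(\Gamma)\ge 1,\,\|\Gamma\|>|T_j|}|w(\Gamma)|\,Y_{T_j}(\Gamma)^2$. The key observation enabling the last inequality is that any cluster in $B$ with $Y_{T_j}(\Gamma)\ge 1$ must satisfy $\|\Gamma\|>|T_j|$: it either contains at least two polymers, or contains a polymer whose type differs from $T_j$, in either case contributing vertices beyond $|T_j|$. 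Equation~\eqref{eqsumYk1} of Lemma~\ref{lemYTkbounds} then bounds each inner sum by $o(n_{T_j} w_{T_j})$, so $|B| = o(1)\sum_j t_j^2 n_{T_j} w_{T_j}$, which is absorbed by the main term for $d$ large and yields the claim with any constant $c < 2/\pi^2$.

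The main obstacle will be the cluster bookkeeping in the last step: verifying that every non-single-polymer cluster contributing a nonzero cosine term genuinely lies in the regime $\|\Gamma\|>|T_j|$ where the sharp tail bound~\eqref{eqsumYk1} applies. The complex-weight cluster expansion and the elementary trigonometric inequalities are routine by comparison.
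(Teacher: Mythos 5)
Your proposal is correct and follows essentially the same route as the paper's proof: identify $\varphi_X(t)$ with a ratio of polymer partition functions with unimodular-modified weights, take the real part of the cluster expansion, isolate the single-polymer clusters of the tracked types to get the $\sum_i n_{T_i}w_{T_i}(\cos(t_i)-1)$ main term, and control the remaining clusters via $|\cos x - 1|\le x^2$, Cauchy--Schwarz, and the tail bound \eqref{eqsumYk1} of Lemma~\ref{lemYTkbounds}. Your bookkeeping for the error clusters (noting that any non-single-polymer cluster with $Y_{T_j}(\Gamma)\ge 1$ has $\|\Gamma\|>|T_j|$) is if anything slightly more careful than the paper's, and the constant $2/\pi^2$ from Jordan's inequality plays the role of the paper's $1/5$.
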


\begin{proof}
Given a cluster $\Gamma\in \cC$, let $Y(\Gamma)=(Y_{T_1}(\Gamma),\ldots, Y_{T_q}(\Gamma))$.
Using the cluster expansion we write
\begin{align*}
\log \E e^{i\langle t, X\rangle} &= \sum_{\Gamma\in \cC} w(\Gamma) e^{i \langle t, Y(\Gamma)\rangle}  - \sum_{\Gamma\in\cC} w(\Gamma) \\
&=  \sum_{\Gamma\in \cC} w(\Gamma) (e^{i \langle t, Y(\Gamma)\rangle}-1) \,.
\end{align*}
Let
\[
\mathcal{M}_j=\left\{\Gamma\in \cC: j=\max\{i: Y_{T_i}(\Gamma)>0\}, \sum_{i=1}^q Y_{T_i}(\Gamma)>1 \right\}\, .
\]
Then
\begin{align*}
\mathrm {Re} \log \E e^{i\langle t, X\rangle} &= \sum_{\Gamma\in \cC} w(\Gamma) (\cos(  \langle t, Y(\Gamma)\rangle)-1)  \\
&=\sum_{i=1}^q n_{T_i} w_{T_i} (\cos (t_i) -1)  +\sum_{j=1}^q\sum_{\Gamma\in \mathcal{M}_j} w(\Gamma) (\cos(  \langle t, Y(\Gamma)\rangle)-1)  \\
&\le-\frac{1}{5}\sum_{i=1}^q t_i^2 n_{T_i} w_{T_i}   +\sum_{j=1}^q\sum_{\Gamma\in \mathcal{M}_j} |w(\Gamma)| \langle t, Y(\Gamma)\rangle^2   \\
&\le-\frac{1}{5}\sum_{i=1}^q t_i^2 n_{T_i} w_{T_i}   +\sum_{j=1}^q\sum_{\Gamma\in \mathcal{M}_j} |w(\Gamma)|j\sum_{i=1}^jt_i^2Y_{T_i}(\Gamma)^2   \\
&\le  -\frac{1}{5}\sum_{i=1}^q t_i^2 n_{T_i} w_{T_i}   +\sum_{i=1}^qt_i^2o(n_{T_i}w_{T_i})   \\
\end{align*}
where for the first inequality we used that $-t^2\leq\cos(t)-1\leq -t^2/5$ for $t\in[-\pi,\pi]$. For the next inequality we used Cauchy-Schwarz, and for the final inequality we used Lemma~\ref{lemYTkbounds}.
\end{proof}

\begin{proof}[Proof of Theorem~\ref{thmMultiLCLTdefect}]
Let $\cT_1=\{T_1,\ldots, T_p\}$, $\cT_2=\{T_{p+1},\ldots, T_q\}$ and $\cT=\cT_1\cup\cT_2$. Let $X_i=X_{T_i}$, $m_i=m_{T_i}$, $\sigma_i=\sigma_{T_i}$, $k_i=k_{T_i}$ for $i\in [q]$. Let $X=(X_1,\ldots, X_q)$ and let 
\[
\tilde X=\left(X_1,\ldots, X_p, \frac{X_{p+1}-m_{p+1}}{\sigma_{p+1}}, \ldots,  \frac{X_{q}-m_{q}}{\sigma_{q}}\right)\, .
\]

By Fourier inversion,
\begin{align*}
\mathbb{P}\left(  \bigcap_{T \in \mathcal T} X_T=k_T\right) &= \frac{1}{(2 \pi)^q}  \int_{[-\pi,\pi]^q} \varphi_X(t) \cdot    e^{-i\langle t, k \rangle }  \, dt\, .
\end{align*}
Making the substitution $t_i=x_i$ for $i\in[p]$ and $t_i=x_i/\sigma_i$ for $i>p$ we have 
\begin{align}\label{eqxsub}
\mathbb{P}\left(  \bigcap_{T \in \mathcal T} X_T=k_T\right)  = \frac{1}{(2 \pi)^q} \prod_{i>p}\sigma_i^{-1}  \int_{B_1} \int_{B_2} \varphi_{\tilde X}(x)g(x)dx_q\ldots dx_1 
\end{align}
where $B_1=[-\pi,\pi]^p$, $B_2=[-\pi\sigma_{p+1}, \pi\sigma_{p+1}]\times\ldots\times [-\pi\sigma_{q}, \pi\sigma_{q}]$ and 
\[
g(x)=\exp\left\{i\sum_{j>p}x_j\frac{m_j-k_j}{\sigma_j}-i\sum_{j\leq p}x_jk_j\right\}\, .
\]

Let $Y=(Y_1, \ldots, Y_q)$ where $Y_{i}\sim \po(\rho_i)$ for $i\in [p]$, $Y_{i} \sim N(0,1)$ for $i>p$, and $Y_1,\ldots,Y_q$ are jointly independent.
Then by Fourier inversion we have the identity 
\begin{align*}
\frac{1}{(2 \pi)^q} \int_{B_1} \int_{\mathbb R^{q-p}} \varphi_{Y}(x)g(x)dx_q\ldots dx_1
 &=  \prod_{T \in  \cT_1} \frac{\rho_T^{k_T} e^{-\rho_T}}{(k_T)!}
\prod_{T \in \mathcal T_2} \frac{e^{- \frac{(k_T-m_T)^2}{2\sigma_T^2}}}{\sqrt{2\pi}}\, . \\
\end{align*}
By~\eqref{eqxsub} (noting that $m_i=(1+o(1))\sigma^2_i$ by Lemma~\ref{lemYTkbounds}), it therefore suffices to show that 
\begin{align*}
\int_{B_1} \int_{B_2} \varphi_{\tilde X}(x)g(x)dx_q\ldots dx_1
 = 
 \int_{B_1} \int_{\mathbb R^{q-p}} \varphi_{Y}(x)g(x)dx_q\ldots dx_1 + o(1)\, .
\end{align*}
Since, $m_i\to\infty$ for $i>p$, Lemma~\ref{lemYTkbounds} implies that $\sigma_i\to\infty$ for $i>p$ also. It follows that
\[
 \int_{B_1} \int_{\R^{q-p}\backslash B_2}\varphi_{Y}(x)g(x)dx_q\ldots dx_1 =o(1)\, 
\]
and so it suffices to show that 
\[
 \int_{B_1} \int_{B_2} |\varphi_{\tilde X}(x)-\varphi_Y(x)|dx_q\ldots dx_1 =o(1)\, .
\]

In~\cite[Theorem 6]{jenssen2020independent} it was shown that $\tilde X$ converges to $Y$ in distribution and so $\varphi_{\tilde X}\to \varphi_Y$ pointwise. 
It therefore suffices, by dominated convergence, to show that $|\varphi_{\tilde X}(x)-\varphi_Y(x)|$ is bounded by an integrable function. 
By Lemma~\ref{lemYTkbounds} and Lemma~\ref{lemBigtmulti},
\[
|\varphi_{\tilde X}(x)|= |\varphi_{X}(x_1,\ldots,x_p, x_{p+1}/\sigma_{p+1},\ldots, x_q/\sigma_q)|\leq e^{-\Theta(\sum_{i=1}^q x_i^2)}\, .
\]
We have a similar bound for $|\varphi_Y(x)|$ and so we are done.

\end{proof}

\section{Independent sets of a given size and structure}  
\label{secSizeStructure}

Here we prove Theorem~\ref{thmFixedSizelocal}.   Recall that for  a set of defect types $\cT$ and a vector of integers $ \mathbf x=( x_T )_{T \in \mathcal T}$, we let $ i_{m, \mathbf x}(Q_d) $ denote the number of independent sets in $Q_d$ of size $m$ with exactly  $ x_T $ defects of type $T$ for each $T \in \mathcal T$.

We  use the following identity, an easy extension of~\eqref{eqIkIdentity}. For any $\lam>0$, 
\begin{equation}
\label{eqIdenDefects}
i_{m, \mathbf x} (Q_d) = \frac{Z(\lam)}{\lam^m} \mathbb{P}_\lam \left[ |\mathbf I | =m,  (X_T)_{T \in \cT} = \mathbf x  \right ] \,,
\end{equation}
where $X_T$ be the random variable counting the number of defects of type $T$ in a sample from $\mu_{\lam}$.
Theorem~\ref{thmFixedSizelocal} follows immediately from~\eqref{eqIdenDefects}, Theorem~\ref{thmFixedSizelam}
and the following lemma.

\begin{lemma}
Fix $\lam>0$ and let $m = m(d)$ be such that that $|\E_{\lam} | \mathbf I| -m | = o(N^{1/2})$.     Let $\cT_1, \cT_2$ be the sets of defect types such that $m_T \to \rho_T$  for some fixed $\rho_T > 0$ as $d \to \infty$ for all $T\in \cT_1$ and $m_T\to\infty$ for all  $T\in \cT_2$. 
Let $ ( k_T )_{T \in \mathcal T_1}$ be a vector of fixed non-negative integers and let $ ( k_T )_{T \in \mathcal T_2}$ be such that $k_T=  \lfloor m_T  +s_T \rfloor$ where $| s_T| = O(\sqrt{ m_T})$ for all $T \in \mathcal T_2$. Let $\mathbf x= (k_T)_{T\in \cT_1\cup\cT_2}$. Then
\begin{align*}
\mathbb{P}_{\lam} \left [ |\mathbf I | =m,  (X_T)_{T \in \cT_1 \cup \cT_2} = \mathbf x  \right ] &= (1+o(1)) \mathbb{P}_{\lam} [ |\mathbf I | =m]  \prod_{T \in  \cT_1} \frac{\rho_T^{k_T} e^{-\rho_T}}{(k_T)!}
  \prod_{T \in \mathcal T_{2}} \frac{e^{ - \frac{ s_T^2}{2 m_T}}}{\sqrt{2 \pi m_T}} \,.
\end{align*} 
\end{lemma}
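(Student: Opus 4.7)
The plan is to refine the proof of Lemma~\ref{lemImZ} so that the local binomial step is carried out conditionally on the defect count vector $(X_T)_{T\in\cT_1\cup\cT_2}=\mathbf x$. Writing
\[
\mathbb P_\lam[|\mathbf I|=m,\,(X_T)=\mathbf x]=\mathbb P_\lam[(X_T)=\mathbf x]\cdot\mathbb P_\lam\!\left[|\mathbf I|=m\mid(X_T)=\mathbf x\right],
\]
the first factor is handled by Theorem~\ref{thmMultiLCLTdefect}, and the proof of Lemma~\ref{lemImZ} already yields $\mathbb P_\lam[|\mathbf I|=m]=(1+o(1))\frac{1+\lam}{\sqrt{2\pi N\lam}}$ under the hypothesis $|\E_\lam|\mathbf I|-m|=o(\sqrt N)$. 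The task therefore reduces to showing
\[
\mathbb P_\lam\!\left[|\mathbf I|=m\mid(X_T)=\mathbf x\right]=(1+o(1))\frac{1+\lam}{\sqrt{2\pi N\lam}}.
\]

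To do this I would mimic the argument of Lemma~\ref{lemImZ} under the conditioning. First invoke Theorem~\ref{thm:TV} to replace $\mu_\lam$ by $\mu_{\cO,\lam}$ at additive cost $O(e^{-N/d^4})$, then condition on the polymer configuration $\mathbf\Gamma$; given $\mathbf\Gamma$, $|\mathbf I|=\|\mathbf\Gamma\|+\text{Bin}\!\left(N-|N(\mathbf\Gamma)|,\tfrac{\lam}{1+\lam}\right)$. By Lemma~\ref{lemShiftBinomial} the binomial probability of hitting exactly $m-\|\mathbf\Gamma\|$ equals $(1+o(1))\frac{1+\lam}{\sqrt{2\pi N\lam}}$ whenever $|N(\mathbf\Gamma)|=o(N)$ and
\[
\frac{\lam}{1+\lam}\bigl(N-|N(\mathbf\Gamma)|\bigr)=m-\|\mathbf\Gamma\|+o(\sqrt N).
\]
So the goal reduces to verifying these two conditions whp under the conditional measure $\mathbb P[\,\cdot\mid(X_T)=\mathbf x]$.

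Next I would decompose $|N(\mathbf\Gamma)|=U+V$, where $U$ is the contribution from polymers whose type lies in $\cT_1\cup\cT_2$ and $V$ the contribution from the remaining types, and decompose $\|\mathbf\Gamma\|$ analogously. Since $|N(S)|$ depends only on the type of $S$, conditioning on $(X_T)=\mathbf x$ fixes $U$ and the corresponding part of $\|\mathbf\Gamma\|$; Lemma~\ref{lemYTkbounds} together with $k_T$ fixed for $T\in\cT_1$ and $|k_T-m_T|=O(\sqrt{m_T})$ for $T\in\cT_2$ (combined with Cauchy--Schwarz and the bound on $\sum_T|T|^2 m_T$ coming from Lemma~\ref{lem:strongtail}) show that $U$ and its $\|\mathbf\Gamma\|$-analog lie within $o(\sqrt N)$ of their unconditional means. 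For the residual $V$ and its $\|\mathbf\Gamma\|$-analog I would extend the characteristic-function computation behind Theorem~\ref{thmMultiLCLTdefect} by two extra coordinates; Lemma~\ref{lemBigtmulti}, adapted to this enlarged vector, still gives the Gaussian-type bound on the characteristic function, and Fourier inversion then yields a joint local central limit theorem for $(X_{T_1},\dots,X_{T_q},|N(\mathbf\Gamma)|,\|\mathbf\Gamma\|)$. The desired conditional concentration of $V$ and of the $\|\mathbf\Gamma\|$-analog follows from this joint LCLT by integration.

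The main obstacle is verifying that conditioning on the defect counts (of which there may be many in $\cT_2$) does not spoil the concentration of the remaining polymer statistics. This is controlled by Lemma~\ref{lem:strongtail}, which forces the total conditioned contribution to $|N(\mathbf\Gamma)|$ and $\|\mathbf\Gamma\|$ to be $o(N)$ in mean and $o(\sqrt N)$ in fluctuation; the cumulant estimates from Section~\ref{secPolymers} then propagate unchanged to the conditional measure, giving the required joint LCLT. Once the conditional probability is shown to equal $(1+o(1))\frac{1+\lam}{\sqrt{2\pi N\lam}}$, combining with Theorem~\ref{thmMultiLCLTdefect} gives the lemma, and feeding this into the identity~\eqref{eqIdenDefects} together with Theorem~\ref{thmFixedSizelam} completes the proof of Theorem~\ref{thmFixedSizelocal}.
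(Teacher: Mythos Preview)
Your overall strategy matches the paper's exactly: reduce via Theorem~\ref{lemPolyApprox} to the odd polymer model, factor as $\mathbb{P}[(X_T)=\mathbf x]\cdot\mathbb{P}_{\cO,\lam}[|\mathbf I|=m\mid (X_T)=\mathbf x]$, handle the first factor with Theorem~\ref{thmMultiLCLTdefect}, and for the second factor condition further on $\mathbf\Gamma$ and invoke the binomial local step from the proof of Lemma~\ref{lemImZ}.

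The paper is much terser at the point where you introduce the $U/V$ decomposition and a joint LCLT. It simply observes that for any $\Gamma$ consistent with $\mathbf x$ and satisfying $\|\Gamma\|=\E\|\mathbf\Gamma\|+o(\sqrt N)$ and $|N(\Gamma)|=\E|N(\mathbf\Gamma)|+o(\sqrt N)$, the proof of Lemma~\ref{lemImZ} already gives $\mathbb{P}[|\mathbf I|=m\mid\mathbf\Gamma=\Gamma]=(1+o(1))\mathbb{P}[|\mathbf I|=m]$, and then cites the \emph{unconditional} CLT (Lemma~\ref{lemGamCLT}) to say these concentration events hold with probability $1-o(1)$. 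Your plan is a more careful way to justify why this unconditional concentration transfers to the conditional measure, a passage the paper leaves implicit. One caution if you pursue the joint LCLT route: the vector $(X_{T_1},\dots,X_{T_q},|N(\mathbf\Gamma)|,\|\mathbf\Gamma\|)$ can have a nearly singular covariance matrix (e.g.\ if the single-vertex type lies in $\cT_2$, it dominates $\|\mathbf\Gamma\|$), so the Fourier inversion is cleaner if phrased in terms of the residuals $V$ you already isolated rather than the full quantities $|N(\mathbf\Gamma)|,\|\mathbf\Gamma\|$.
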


\begin{proof}

Using Theorem~\ref{lemPolyApprox} and Theorem~\ref{thmMultiLCLTdefect}, it is enough to show that 
\begin{align*}
\mathbb{P}_{\mathcal{O},\lam} \left[  | \mathbf I |= m  \Big |  (X_T)_{T \in \mathcal{T}_1 \cup \mathcal{T}_2} = \mathbf x   \right ] & = (1+o(1) ) \mathbb{P}_{\mathcal{O},\lam} \left[  | \mathbf I |= m \right ] \, ,
\end{align*}
where the above probabilities are with respect to $\mu_{\cO, \lam}$ and $X_T$ is now the random variable counting the number of defects of type $T$ in a sample from $\mu_{\cO, \lam}$.
Let $\mathbf \Gamma$ be the random collection of compatible polymers chosen at Step 1 in the definition of $\mu_{\cO, \lam}$ (Definition~\ref{defmu}) and let $\Gamma$ be a fixed collection of compatible polymers such that 
\begin{equation}
\label{eqn:cltgamma}
\|\Gamma\| = \mathbf{E}[\|\mathbf{\Gamma}\|] + o(N^{1/2})
\end{equation}
and
\begin{equation}
\label{eqn:cltngamma}
|N(\Gamma)| = \mathbf{E}[|N(\mathbf\Gamma)|] + o(N^{1/2}).
\end{equation}
Then the proof of Lemma~\ref{lemImZ} gives us that
\begin{align*}
\mathbb{P}_{\mathcal{O},\lam} \left[  | \mathbf I |= m  \Big | \mathbf{\Gamma} = \Gamma   \right ] = (1+o(1) ) \mathbb{P}_{\mathcal{O},\lam} \left[  | \mathbf I |= m \right ] \,
\end{align*}
and so in particular, if $\Gamma$ is also consistent with $\mathbf x$ (i.e.\ $\Gamma$ has precisely $k_T$ polymers of type $T$ for all $T\in \cT_1 \cup \cT_2$),
\begin{align*}
\mathbb{P}_{\mathcal{O},\lam} \left[  | \mathbf I |= m  \Big |  (X_T)_{T \in \mathcal{T}_1 \cup \mathcal{T}_2} = \mathbf x ,~\mathbf{\Gamma} = \Gamma  \right ] & = (1+o(1) ) \mathbb{P}_{\mathcal{O},\lam} \left[  | \mathbf I |= m \right ] \,. 
\end{align*}
Finally, Lemma~\ref{lemGamCLT} gives us that~(\ref{eqn:cltgamma}) and~(\ref{eqn:cltngamma}) both hold with probability $1 - o(1)$, completing the proof.

\end{proof}

\section*{Acknowledgements}
The authors thank Catherine Greenhill for helpful remarks about maximum entropy in combinatorial enumeration.  WP is supported in part by NSF grant DMS-1847451. AP is supported in part by NSF grant CCF-1934915.

\end{document}